\documentclass[12pt,twoside]{amsart}
\input{includeNice3}
\usepackage{mabliautoref}
\usepackage{amssymb}
\usepackage{amscd}
\usepackage[abbrev,alphabetic]{amsrefs}
\usepackage{hyperref, cleveref}

\usepackage[OT2,T1]{fontenc}
\DeclareSymbolFont{cyrletters}{OT2}{wncyr}{m}{n}
\DeclareMathSymbol{\Sha}{\mathalpha}{cyrletters}{"58}

\newcommand{\Bref}[1]{(\hyperref[item:Brauer]{\ensuremath{B_{#1}}})}
\newcommand{\PIDref}[1]{(\hyperref[item:PID]{\ensuremath{PID_{#1}}})}
\newcommand{\AFref}[1]{(\hyperref[item:AbrashkinFontaine]{\ensuremath{AV_{#1}}})}
\newcommand{\Hodgeref}[1]{(\hyperref[item:HodgeNumbers]{\ensuremath{h_{#1}^{p,q}}})}

\usepackage[a4paper,margin=1in]{geometry}  

\usepackage{xparse}
\let\realItem\item % save a copy of the original item
\makeatletter
\NewDocumentCommand\myItem{ o }{%
   \IfNoValueTF{#1}%
      {\realItem}% add an item
      {\realItem[#1]\def\@currentlabel{#1}}% add an item and update label
}
\usepackage{enumitem}
\setlist[enumerate]{
    before=\let\item\myItem,       % use \myItem in enumerate
    label=\textnormal{(\arabic*)}, % format the label
    widest=(2')                    % set the widest label
}

\usepackage{xypic}
\usepackage{tikz-cd}
\usepackage{caption}

\usepackage{url}
\RequirePackage{booktabs, multirow}
\RequirePackage{pgf}
\title[]
{Enriques' characterization of Abelian Surfaces in positive characteristic} 
\author{Jefferson Baudin, Gebhard Martin} 
\subjclass[2020]{14G17, 14J27, 14K05}
\keywords{Abelian surfaces, positive characteristic, maximal Albanese dimension}

\address{Mathematisches Institut  Universität Bonn, Endenicher Allee 60, 53115 Bonn, Germany}
\email{gmartin@math.uni-bonn.de} 

\address{EPFL SB MATH CAG
	MA C3 615 (B\^atiment MA)
	Station 8
	CH-1015 Lausanne}
\email{jefferson.baudin@epfl.ch}

\begin{document}

\maketitle

\begin{abstract}
Extending Enriques' characterization to algebraically closed fields of characteristic $p \geq 7,$ we show that every smooth projective surface $X$ with $h^1(X,\mathcal{O}_X) = 2$ and $p_1(X) = p_2(X) = 1$ is birational to an Abelian surface.
This characterization fails if $p \leq 5$ and we give a sharp alternative.
\end{abstract}

\tableofcontents

\section{Introduction}
In \cite{Enriques}, Enriques characterized Abelian surfaces birationally among smooth projective complex surfaces $X$ as those with irregularity $h^1(X,\mathcal{O}_X) = 2$ and first and second plurigenus $p_1(X) = p_2(X) = 1$. This has been generalized to higher dimensions by Chen--Hacon \cite{Chen_Hacon_Characterization_of_abelian_varieties} (see also \cite{Jiang_An_effective_version_of_a_thm_of_Kawamata_on_the_Albanese_map, Pareschi_Basic_results_on_irr_vars_via_FM_methods}).

First steps towards such a characterization of Abelian surfaces in positive characteristics have been made by Ferrari \cite{Ferrari} (see also \cite{Baudin_Effective_Characterization_of_ordinary_abelian_varieties ,Hacon_Patakfalvi_Zhang_Bir_char_of_AVs} for statements in higher dimensions). The problem is particularly subtle in characteristic $2$ and $3$, because some \mbox{(quasi-)}bielliptic surfaces with trivial canonical bundle cannot be distinguished from Abelian surfaces using only Hodge numbers and plurigenera. Thus, the following formulation of our main theorem is natural:

\begin{theorem}\label{thm: mainintro}
Let $X$ be a smooth projective surface over an algebraically closed field of characteristic $p > 0$. Then, $X$ is birational to an Abelian or (quasi-)bielliptic surface $S$ with $\omega_S \cong \mathcal{O}_S$ if and only if $h^1(X,\mathcal{O}_X) = 2$ and $p_i(X) = 1$ for
$$
i \leq \begin{cases}
2  \text{ if } p \geq 7, \\
3 \text{ if }  p = 5, \\
4 \text{ if }  p \leq 3.
\end{cases}
$$
\end{theorem}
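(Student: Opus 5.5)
The "only if" direction is the easier half. Plurigenera and $h^1(\mathcal{O})$ are birational invariants of smooth projective surfaces, so we may replace $X$ by $S$. If $S$ is Abelian, or (quasi-)bielliptic with $\omega_S\cong\mathcal{O}_S$, then $p_i(S)=1$ for all $i\ge1$. For $h^1(\mathcal{O}_S)$ I would use the Bombieri--Mumford classification. An Abelian surface has $h^1(\mathcal{O})=2$. The (quasi-)bielliptic surfaces with trivial canonical bundle occur only for $p\in\{2,3\}$, and there they have $b_1=2$ together with $h^1(\mathcal{O})=2$, because the Picard scheme is non-reduced. Both values can be read off the tables in Bombieri--Mumford.

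For the "if" direction, $p_1=p_2=1$ excludes $\kappa=-\infty$. I would first show $\kappa(X)=0$. Suppose $\kappa\ge1$. Take the Albanese morphism $a\colon X\to A:=\mathrm{Alb}(X)$; then $\dim A=b_1/2\le h^1(\mathcal{O}_X)=2$.

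Case 1: $a$ is generically finite. Then $\dim A=2$ and $a$ is surjective. Pulling back $H^0(\Omega^1_A)$ gives $h^0(\Omega^1_X)\ge2$, and $\wedge$ of two such forms gives a nonzero $2$-form. Wedging with a Frobenius-twisted form and using a Kawamata-type argument, I would try to show $|2K_X|$ moves unless $K_X$ is $a$-exceptional. Concretely: write $K_X=a^*K_A+R=R$, with $R$ the ramification divisor. If $a$ is separable and $R$ is non-exceptional, then $\kappa(X)\ge1$ should force $p_2\ge2$ via the effective result of Chen--Hacon type. In positive characteristic this needs generic vanishing (Hacon--Patakfalvi). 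Inseparable $a$ has to be treated separately; that is where the small-prime phenomena start.

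Case 2: $a$ is a fibration onto an elliptic curve $E$, or $b_1<4$ and $\dim A\le1$. Then $\kappa(X)=1$ with an elliptic or quasi-elliptic fibration, or $\kappa=0$. For $\kappa(X)=1$ I would use the canonical bundle formula
$$\omega_X\cong f^*(\omega_C\otimes L)\otimes\mathcal{O}_X\Big(\sum a_iP_i\Big),$$
including wild fibres. With $\deg L=\chi(\mathcal{O}_X)+\operatorname{length}T\ge0$, I would estimate $h^0(nK_X)$ by Riemann--Roch on $C$. The multiplicities $m_i$ and the wild contributions are what force $p_n\ge2$ at $n=2$, $3$ or $4$. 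Multiple fibres of multiplicity $p$ with large $a_i$, at $p=2,3,5$, are what make the thresholds sharp. I expect this case-by-case bound on $p_n$, including wild fibres and $h^1(\mathcal{O})=2$ forcing $\operatorname{length}T>0$ or $g(C)=1$, to be the main obstacle. I would first treat $g(C)\ge2$ directly, then $g(C)=1$, where $\deg\omega_C=0$ and the fractional parts $\lfloor na_i/m_i\rfloor$ decide everything. This explains why $m=p\le5$ needs $n$ up to $p-1$, giving the thresholds $n=4$ for $p=3$ (with $m=4$ allowed for quasi-elliptic fibrations) and $n=3$ for $p=5$. The case $g(C)=0$ is excluded since $b_1\ge2$ is needed, which I would get from $h^1(\mathcal{O})=2$ via the Picard scheme.

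Once $\kappa(X)=0$, pass to the minimal model $S$. Then $p_1=1$ gives $K_S\sim0$ as a numerically trivial effective divisor. By the classification, $S$ is K3, Enriques, Abelian, or (quasi-)bielliptic with $K_S=0$. K3 surfaces have $h^1=0$, and Enriques surfaces have $h^1\le1$, so $S$ is Abelian or (quasi-)bielliptic with $\omega_S\cong\mathcal{O}_S$. For $p\ge7$ no bielliptic surface has trivial canonical bundle, hence $S$ is Abelian.
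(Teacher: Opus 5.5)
Your ``only if'' direction and the last step (from $\kappa(X)=0$ to the conclusion) are fine. The gap is that the proposal never rules out $\kappa(X)\ge 1$, which is the actual content of the theorem, and one of your reductions is false.

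\textbf{Case 1 and Case 2 as written.} In Case~1 you list tools (a Kawamata/Chen--Hacon mechanism, generic vanishing, Frobenius twists), but you give no argument. The Chen--Hacon route rests on generic vanishing, which is not available in that form in characteristic $p$, and you leave the inseparable case open. In Case~2 you assert $\kappa\in\{0,1\}$, although an Albanese fibration onto an elliptic curve does not exclude general type; the paper handles $\kappa=2$ via Ekedahl. You also never use the immediate identity $\chi(\mathcal{O}_X)=1-h^1(\mathcal{O}_X)+p_1(X)=0$, which is what makes the canonical bundle formula rigid.

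\textbf{What the paper does instead for $\kappa=1$ with $b_1=4$.}
\begin{itemize}
\item No fibre of the Iitaka fibration $g\colon X\to B$ is contracted by the Albanese map, so all reduced fibres are smooth and the Jacobian fibration is smooth.
\item Hence $g(B)=1$, and $p_2\le 1$ leaves at most one multiple fibre $mF$, which is tame with coefficient $m-1$.
\item From $h^2(\mathcal{O}_X(-F))=1$ it gets surjectivity of ${\rm Pic}^0_{X/k}\to{\rm Pic}^0_{F/k}$, and so lifts $\mathcal{O}_F(-F)$ to some $\mathcal{M}\in{\rm Pic}^0_{X/k}$.
\item In each of the three cases for the cohomology of $\mathcal{M}(F)$, it finds an effective divisor supported on fibres whose intersection with an ample divisor violates divisibility by $m$.
\end{itemize}
Nothing in your Case~1 replaces this. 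The paper notes that the ample divisor is used essentially, since non-algebraic elliptic surfaces with these invariants exist.

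\textbf{The false reduction $g(C)=0$.} Your claim that $g(C)=0$ is excluded because $b_1\ge 2$ is wrong, and this is exactly where the characteristic-$5$ phenomenon lives. When $b_1(X)=2$, the fibres of the Iitaka fibration can dominate the Albanese elliptic curve, and then $g(C)=\frac12 b_1(X)-1=0$ by Katsura--Ueno. The Katsura--Ueno surface $X=(C\times E)/(\mathbb{Z}/5\mathbb{Z})\to\mathbb{P}^1$ of Section~\ref{sec: examples} has $h^1(\mathcal{O}_X)=2$ and $K_X\sim 2F$ with $5F$ a wild fibre. Hence $p_1=p_2=1$ and $p_3=2$.

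\textbf{Where wild fibres actually belong.} The wild-fibre analysis you place over an elliptic base belongs to $g(C)=0$. There one needs:
\begin{itemize}
\item ${\rm length}(\mathcal{T})=2$ (Ferrari, using that the Jacobian is smooth);
\item the Katsura--Ueno constraints: $p_2\ge 2$ unless there are at most two multiple fibres, all wild with $a_i<m_i-1$, in four explicit configurations;
\item the estimate $p_i(X)\ge 1+\sum_j\lfloor i a_j/m_j\rfloor$.
\end{itemize}

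\textbf{The elliptic base.} Over an elliptic base, $\chi=0$ and $p_2=1$ force ${\rm length}(\mathcal{T})=0$, so all multiple fibres are tame. The key point there is that the Jacobian fibration has $p_1=1$. This makes $\mathcal{L}$ trivial and $J(X)$ (quasi-)bielliptic with trivial canonical bundle, which forces $p\le 3$. Then any multiple fibre gives $p_4\ge 2$, and a tame double fibre realizes $p_3=1$. Your heuristic ``$n$ up to $p-1$'' does not even reproduce the stated thresholds.
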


To obtain a characterization of Abelian surfaces, one can then add another invariant that distinguishes them from (quasi-)bielliptic surfaces with trivial canonical. 
For example, assuming $p \geq 5$, these surfaces do not exist, so that we get the following corollary:

\begin{corollary} \label{cor: mainintro}
Let $X$ be a smooth projective surface over an algebraically closed field of characteristic $p \geq 5$. Then, $X$ is birational to an Abelian surface if and only if $h^1(X,\mathcal{O}_X) = 2$ and $p_i(X) = 1$ for
$$
i \leq \begin{cases}
2  \text{ if } p \geq 7. \\
3 \text{ if }  p = 5. \\
\end{cases}
$$
\end{corollary}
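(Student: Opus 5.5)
The corollary will follow by combining Theorem \ref{thm: mainintro} with the classification of bielliptic and quasi-bielliptic surfaces in characteristic $p \geq 5$.

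\emph{The ``only if'' direction.} This is classical and does not need the theorem. Suppose $X$ is birational to an Abelian surface $A$. The irregularity $h^1(\mathcal{O})$ and the plurigenera $p_i$ are birational invariants of smooth projective surfaces, so they can be computed on $A$. Since $\omega_A \cong \mathcal{O}_A$, we get $p_i(X) = p_i(A) = 1$ for all $i$. Moreover $h^1(A,\mathcal{O}_A) = \dim A = 2$.

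\emph{The ``if'' direction.} Assume $h^1(X,\mathcal{O}_X) = 2$ and that the plurigenera conditions hold: $p_1 = p_2 = 1$ if $p \geq 7$, and $p_1 = p_2 = p_3 = 1$ if $p = 5$. In both cases these are exactly the hypotheses of Theorem \ref{thm: mainintro} for the given $p$. Hence $X$ is birational to an Abelian surface, or to a (quasi-)bielliptic surface $S$ with $\omega_S \cong \mathcal{O}_S$. It remains to exclude the second possibility when $p \geq 5$.

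\emph{Ruling out (quasi-)bielliptic $S$.} Quasi-elliptic fibrations exist only in characteristic $2$ and $3$, so quasi-bielliptic surfaces do not occur when $p \geq 5$. A bielliptic surface is a quotient $S = (E \times F)/G$, where $G$ is a finite group scheme acting on $E$ by translations and faithfully on $F$ with $F/G \cong \mathbb{P}^1$. By the Bombieri--Mumford classification, the canonical class $K_S$ has order $|G/G_0|$ or a related small number lying in $\{2,3,4,6\}$. Trivial canonical class forces the order of $K_S$ to be $1$, which happens only in the exceptional cases where the relevant part of $G$ is a non-reduced group scheme ($\mu_2$, $\mu_3$, $\alpha_2$, $\alpha_3$, \dots). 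Such group schemes exist only for $p \in \{2,3\}$.

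Equivalently, the paper's introduction already states that these surfaces do not exist for $p \geq 5$. Over $k$ with $p \geq 5$, all finite group schemes appearing in the classification are étale, and then $\omega_S$ is a nontrivial torsion line bundle. Therefore $S$ must be Abelian, which gives the corollary.

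The only real content is this last step, namely checking the Bombieri--Mumford list for $\omega_S \cong \mathcal{O}_S$ when $p \geq 5$. I expect to cite it rather than reprove it.
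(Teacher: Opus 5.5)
Your proposal is correct and follows the paper's route: the paper also obtains the corollary directly from Theorem~\ref{thm: mainintro}, together with the fact, taken from the classification, that (quasi-)bielliptic surfaces with $\omega_S\cong\mathcal{O}_S$ do not exist for $p\geq 5$.

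One correction to your justification. In characteristics $2$ and $3$, trivial $\omega_S$ does not require a non-reduced group scheme. For example, $G=\mathbb{Z}/3$ acting on a $j=0$ curve in characteristic $3$, or $G=\mathbb{Z}/2$ acting by $-1$ in characteristic $2$, already gives $\omega_S\cong\mathcal{O}_S$, because the relevant roots of unity become $1$. The correct reason for $p\geq 5$ is the one in your second paragraph: $G$ is étale of order prime to $p$, and it acts on $H^0(F,\omega_F)$ through a nontrivial root of unity of order $2$, $3$, $4$ or $6$.
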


Alternatively, we have the following easier characterization if we replace $h^1(X,\mathcal{O}_X)$ by $\frac{1}{2}b_1(X)$, the dimension of the Picard scheme of $X$. Note that these two quantities will differ if and only if the Picard scheme is non-reduced, a phenomenon that can only happen in positive characteristic:

\begin{proposition} \label{prop: mainintro2}
Let $X$ be a smooth projective surface over an algebraically closed field $k$ of characteristic $p > 0$. Then, $X$ is birational to an Abelian surface if and only if $b_1(X) = 4$ and $p_1(X) = p_2(X) = 1$.
\end{proposition}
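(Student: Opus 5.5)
The easy direction is immediate. Both $b_1$ and the plurigenera $p_m$ are birational invariants of smooth projective surfaces. An Abelian surface $A$ has $\operatorname{Pic}^0_A$ of dimension $2$, so $b_1(A)=4$, and $\omega_A\cong\mathcal{O}_A$ gives $p_1=p_2=1$.

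For the converse, I take $X$ with $b_1(X)=4$ and $p_1=p_2=1$. I first pass to a minimal model and determine its Kodaira dimension. Since $p_1\geq 1$, the surface is not ruled, so $\kappa(X)\geq 0$. Next I use the Albanese map $a\colon X\to \operatorname{Alb}(X)$, where $\dim\operatorname{Alb}(X)=\tfrac12 b_1(X)=2$. The central step is to show that $a$ is surjective, i.e.\ that $X$ has maximal Albanese dimension. Suppose instead that the image is a curve $C$. By the universal property it generates $\operatorname{Alb}$, so $g(C)\geq 2$ (a curve of genus $2$, or more). Then $a$ factors through a fibration $f\colon X\to C$ with connected fibres after Stein factorization, and $g(C)\geq 2$. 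I then compare plurigenera using $f_*\omega_{X/C}^{m}$. In positive characteristic the nefness/weak positivity of $f_*\omega_{X/C}$ can fail in general, but it does hold for fibrations of surfaces. Concretely, one has $\deg f_*\omega_{X/C}\geq 0$ (Szpiro), and for $m\geq 2$ one can use the fibre genus. With $\omega_X^{2}=f^*\omega_C^{2}\otimes\omega_{X/C}^{2}$ and $\deg\omega_C^{2}\geq 4g-4\geq 4$, Riemann--Roch on $C$ then gives $p_2(X)\geq 2$ whenever the fibres have $p_a\geq 1$. If the general fibre is rational, $X$ is ruled, which is excluded. If the fibres have arithmetic genus $0$ but are singular, i.e.\ quasi-elliptic-type phenomena, these cannot occur in genus $0$. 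This contradicts $p_2=1$, so $a$ is generically finite onto the Abelian surface $A=\operatorname{Alb}(X)$.

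Now $a\colon X\to A$ is generically finite and surjective, and I must show it is birational. Write $K_X = a^*K_A + R = R$ with $R\geq 0$ the ramification divisor. Here I must be careful about inseparability. If $a$ is inseparable, then $a$ factors through a purely inseparable map; the plan is to show that this forces $\operatorname{Alb}$ to change, via the universal property and the fact that $b_1$, not $h^1(\mathcal{O})$, is used. Alternatively, one can invoke the earlier results of the paper on maximal Albanese dimension. Assuming separability, if $R\neq 0$ is not contracted, then $h^0(2K_X)=h^0(2R)$. For this I use the standard argument: generic vanishing style techniques are unavailable, so instead I argue that $\kappa(X)\geq 1$ as soon as $R$ has a non-exceptional component, and that $p_1=p_2=1$ with $\kappa\geq1$ contradicts the classification. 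Alternatively, if $\kappa(X)=0$, the classification of minimal surfaces with $\kappa=0$ (Bombieri--Mumford) and $b_1=4$ leaves only Abelian surfaces, since (quasi-)bielliptic surfaces have $b_1=2$, and K3 and Enriques surfaces have $b_1=0$. So the real task reduces to proving $\kappa(X)=0$.

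To rule out $\kappa\geq 1$ for a surface of maximal Albanese dimension with $p_1=p_2=1$, I expect the main obstacle to be the following. The positive-characteristic substitute for Chen--Hacon is needed: the failure of generic vanishing, and inseparability of $a$. I would try first to show that $h^0(\omega_X\otimes P)\geq 1$ for all $P\in\operatorname{Pic}^0(A)$, via Euler characteristic ($\chi(\omega_X)\geq 0$ for maximal Albanese dimension surfaces, which follows from the classification in the non-general-type case and from Noether-type inequalities in the general-type case). Then I would use the multiplication maps $H^0(\omega_X\otimes P)\otimes H^0(\omega_X\otimes P^{-1})\to H^0(\omega_X^{2})$. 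Letting $P$ vary over a $2$-dimensional family produces infinitely many divisors in $|2K_X|$ unless $|K_X\otimes P|$ is constant in an appropriate sense. Hence $p_2\geq 2$ unless $K_X$ is exceptional over $A$, i.e.\ $X$ is birational to $A$.
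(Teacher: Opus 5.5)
There is a genuine gap, and it is in the half of the argument that carries the content of the proposition.

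\textbf{What is fine, and what needs repair.} Your easy direction, the passage to a minimal model, and the observation that $\kappa(X)=0$ together with $b_1(X)=4$ forces an Abelian surface are all correct. Your first step, that the Albanese image is not a curve, can be repaired, but not as written. Nefness of $f_*\omega_{X/C}$ does fail for fibred surfaces in characteristic $p$ (Moret-Bailly's pencils are counterexamples). For $\kappa(X)=1$ you should instead observe that the Albanese fibration is then the Iitaka fibration, and apply \eqref{eq: canonical} to get $p_2(X)\ge 3g(C)-3\ge 3$.

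\textbf{Where the gap is.} The gap is in ruling out $\kappa(X)\ge 1$ once $a$ is generically finite, and none of your arguments works there.
\begin{itemize}
\item The claim that ``$p_1=p_2=1$ with $\kappa\ge 1$ contradicts the classification'' is false. Minimal elliptic surfaces with $p_1=p_2=1$ exist (\autoref{ex: 5}, \autoref{ex: 3}), and $\kappa=2$ needs Ekedahl-type input.
\item The generic-vanishing substitute cannot work. You have
$\chi(\omega_X\otimes P)=\chi(\mathcal{O}_X)=1-h^1(X,\mathcal{O}_X)+p_1(X)\le 2-\tfrac12 b_1(X)=0$, because $h^1(X,\mathcal{O}_X)\ge \tfrac12 b_1(X)$. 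So Riemann--Roch yields no sections of $\omega_X\otimes P$. Indeed, an Abelian surface has $h^0(\omega_A\otimes P)=0$ for all $P\neq\mathcal{O}_A$, so any argument giving $h^0(\omega_X\otimes P)\ge 1$ for every $P$ would prove too much.
\item The inseparability discussion is only a plan, not an argument.
\end{itemize}

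\textbf{The missing idea.} You are missing the one configuration that no plurigenus count or multiplication-map argument can exclude. Suppose $\kappa(X)=1$ and $a$ is generically finite. As in the proof of \autoref{thm: main}, one finds the following:
\begin{itemize}
\item the Iitaka fibration $g\colon X\to B$ is elliptic over an elliptic curve, all its fibres have smooth reduction, and $\chi(\mathcal{O}_X)=0$;
\item $p_2(X)\le 1$ forces no wild fibres and at most one multiple fibre $mF$;
\item $\omega_X\cong g^*\mathcal{L}^{-1}((m-1)F)$ with $\deg\mathcal{L}=0$.
\end{itemize}
For $m\ge 2$ this still gives $p_2(X)=1$, and $K_X$ is not exceptional. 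This is exactly the shape of \autoref{ex: 3}, except that there $b_1=2$. Your multiplication-map heuristic is therefore contradicted precisely here.

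\textbf{How the paper handles it.} \autoref{thm: main} excludes this configuration as follows.
\begin{enumerate}
\item Lift $\mathcal{O}_F(-F)$, a point of exact order $m$ in ${\rm Pic}^0_{F/k}$, to some $\mathcal{M}\in{\rm Pic}^0_{X/k}$. This uses surjectivity of $H^1(X,\mathcal{O}_X)\to H^1(F,\mathcal{O}_F)$.
\item Show that each of the three cases $h^0(\mathcal{M}(F))\neq 0$, $h^2(\mathcal{M}(F))\neq 0$, or both vanishing, produces an effective divisor supported on fibres.
\item Intersect that divisor with an ample divisor; this gives a contradiction modulo $m$. Projectivity is essential at this step.
\end{enumerate}
The proposition then follows: $\chi(\mathcal{O}_X)\le 0$ from $h^1(X,\mathcal{O}_X)\ge \tfrac12 b_1(X)$, and $\chi(\mathcal{O}_X)\ge 0$ from \autoref{lem: chi1}. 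Then apply \autoref{lem: chi2}.

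\textbf{On your ``alternatively''.} If you simply cite \autoref{thm: main} after a repaired first step, you do get a valid route that differs slightly from the paper's (it bypasses \autoref{lem: chi1} and \autoref{lem: chi2}). But then everything after that step in your proposal is superfluous, and all the substance lies in that theorem.
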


These characterizations are sharp in the sense that no assumption on the higher plurigenera can be dropped. This follows from the three examples we give in \autoref{sec: examples}. \\

\noindent 
\textbf{Acknowledgements.}
The authors would like to thank Sofia Tirabassi for discussions related to the topic of this article. JB was funded by grant \#20021/231484 from the Swiss National Science Foundation.
% \begin{theorem}
% Let $X$ be a smooth projective surface with $b_1(X) = 4$ and $p_2(X) = 1$. Then, $X$ is an Abelian surface.
% \end{theorem}

% This is a consequence of the following:
\section{Preliminaries}
Let us fix our setup and notation:

\begin{itemize}
\item Throughout, we work over an algebraically closed field $k$ of characteristic $p \geq 0$.
\item Unless stated otherwise, $X$ denotes a smooth projective surface over $k$.
\item We define the $i$-th plurigenus as $p_i(X) \coloneqq h^0(X,\omega_X^{\otimes i})$.
\item The Kodaira dimension of $X$ is denoted by $\kappa(X)$.
\item We say that $X$ has maximal Albanese dimension if it admits a generically finite morphism to an Abelian variety.
\end{itemize}
We assume that the reader is familiar with the classification of minimal surfaces of special type (see \cite{BombieriMumfordI, BombieriMumford, BombieriMumfordIII}).
We will use the following well-known consequence of the fact that $h^1(\mathbb{P}^1,\mathcal{O}_{\mathbb{P}^1}) = 0$:

\begin{lemma} \label{lem: contract}
Let $f \colon X \to A$ be a morphism from a rationally chain connected variety $X$ to an Abelian variety $A$. Then, $f(X)$ is a point.
\end{lemma}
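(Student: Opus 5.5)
The plan is to reduce to the case of a single rational curve and then chain the points together. Recall that $X$ is rationally chain connected if any two points $x, y \in X(k)$ can be joined by a finite connected chain of rational curves. So it is enough to prove that every morphism $g \colon \mathbb{P}^1 \to A$ is constant. Indeed, granting this, let $C_1, \dots, C_n$ be a chain of rational curves in $X$ from $x$ to $y$. Each $C_i$ is the image of a morphism $\nu_i \colon \mathbb{P}^1 \to X$, for instance the normalization followed by the inclusion. Then $f \circ \nu_i$ is constant for every $i$, so $f$ contracts each $C_i$ to a point. Consecutive curves in the chain meet, so all the $C_i$ map to one and the same point, and hence $f(x) = f(y)$. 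Since $x$ and $y$ were arbitrary closed points, $f(X)$ is a single point.

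For the key step, take a morphism $g \colon \mathbb{P}^1 \to A$, and after translating assume $g(0) = 0$. I would use the standard fact that $\Omega^1_A \cong \mathcal{O}_A \otimes_k T_0A^\vee$ is free, generated by global, translation-invariant $1$-forms. Suppose $g$ were non-constant. Then its image is a curve $C \subset A$, and $g$ factors through the normalization $\widetilde{C} \to C$.

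I would avoid differentials at this point, because in characteristic $p$ the map $g$ could be inseparable and the pullback of $1$-forms would tell us nothing. Instead I would use the universal property of the Albanese (equivalently, the Jacobian). Since $g$ maps to an Abelian variety and sends $0$ to $0$, it factors through $\mathrm{Alb}(\mathbb{P}^1) = \mathrm{Pic}^0(\mathbb{P}^1)^\vee$. This is the statement that a pointed morphism from a smooth projective curve to an Abelian variety factors through its Jacobian. Now $\mathrm{Pic}^0(\mathbb{P}^1)$ has tangent space $H^1(\mathbb{P}^1, \mathcal{O}_{\mathbb{P}^1}) = 0$, so it is a point, and therefore $g$ is constant. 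This is exactly the role of the vanishing $h^1(\mathbb{P}^1,\mathcal{O}_{\mathbb{P}^1}) = 0$ mentioned in the paper.

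I would also keep a more elementary route in reserve. One can show directly that any rational map from $\mathbb{A}^1$ or $\mathbb{P}^1$ to a group variety $A$ is constant. The reason is that the induced morphism $\mathbb{P}^1 \times \mathbb{P}^1 \to A$, $(s,t) \mapsto g(s) - g(t)$, contracts the diagonal. The rigidity lemma then forces $g$ to be constant, using that $\mathbb{P}^1 \times \mathbb{P}^1$ has trivial Albanese.

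The main obstacle is making the Jacobian-factorization step rigorous in all characteristics, since inseparability is the real issue there. The factorization through the Albanese is characteristic-free, so I would rely on it as the primary argument.
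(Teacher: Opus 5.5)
Your main argument is correct and is essentially the paper's. The paper gives no written proof and only calls the lemma a well-known consequence of $h^1(\mathbb{P}^1,\mathcal{O}_{\mathbb{P}^1})=0$; that is exactly your route of chaining along rational curves and then killing maps $\mathbb{P}^1 \to A$ via the triviality of $\mathrm{Pic}^0$, hence of the Jacobian/Albanese, of $\mathbb{P}^1$. Your ``reserve'' argument is not needed and, as stated, is circular: triviality of the Albanese of $\mathbb{P}^1\times\mathbb{P}^1$ already contains the claim, and contracting the diagonal does not by itself trigger the rigidity lemma.
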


Recall the Noether formula
\begin{equation} \label{eq: Noether} \tag{N}
\chi(\mathcal{O}_X) = \frac{K_X^2 + c_2(X)}{12}
\end{equation}
and the canonical bundle formula for genus $1$ fibrations $g: X \to B$:
\begin{equation} \label{eq: canonical} \tag{C}
\omega_X \cong g^*(\mathcal{L}^{-1} \otimes_{\mathcal{O}_B} \omega_B) \otimes_{\mathcal{O}_X} \mathcal{O}_X(\sum_{b \in B} a_b F_b).
\end{equation}
Here:
\begin{itemize}
\item  $R^1 g_* \mathcal{O}_X \cong \mathcal{L} \oplus \mathcal{T}$ with $\mathcal{L}$ invertible and $\mathcal{T}$ torsion;
\item For $b \in B$, $m_bF_b$ is a fiber of $g$ and $m_b$ is its multiplicity;
\item $0 \leq a_b < m_b$;
\item $a_b = m_b-1$ for $b \not \in {\rm Supp}(\mathcal{T})$;
\item ${\rm deg}(\mathcal{L}^{-1} \otimes_{\mathcal{O}_B} \omega_B) = 2g(B) - 2 + \chi(\mathcal{O}_X) + {\rm length}(\mathcal{T})$.
\end{itemize} 
We also need the following characterization of wild fibers of genus $1$ fibrations \cite[Proposition 4]{BombieriMumford}:

\begin{lemma} \label{lem: wild}
Let $g \colon X \to B$ be a genus $1$ fibration. For a multiple fiber $mF$ of $g$, the following are equivalent:
\begin{enumerate}
\item $\mathcal{T}_{g(F)} \neq 0$;
\item $h^0(mF,\mathcal{O}_{mF}) \geq 2$;
\item ${\rm ord}(\mathcal{O}_F(F)) < m$.
\end{enumerate}
\end{lemma}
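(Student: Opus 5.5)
The plan is to reduce everything to cohomology of the infinitesimal thickenings $nF$ of the reduced multiple fiber, for $1 \leq n \leq m$. Fix $b = g(F)$ and let $\nu = {\rm ord}(\mathcal{O}_F(F))$. Recall that $F$ is an indecomposable curve of canonical type. In particular $h^0(F,\mathcal{O}_F) = 1$ and $\omega_F \cong \mathcal{O}_F$, and $\nu$ divides $m$ because $\mathcal{O}_F(mF) \cong \mathcal{O}_F$. Throughout I will use two facts:
\begin{itemize}
\item Riemann--Roch gives $\chi(\mathcal{O}_{nF}) = -\tfrac{1}{2}(n^2F^2 + nK_X\cdot F) = 0$ for all $n$, so $h^0(\mathcal{O}_{nF}) = h^1(\mathcal{O}_{nF})$.
\item There are exact sequences
\[
0 \to \mathcal{O}_F(-nF) \to \mathcal{O}_{(n+1)F} \to \mathcal{O}_{nF} \to 0 .
\]
\end{itemize}

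\textbf{(1)$\Leftrightarrow$(2).} The fibers of $g$ are curves, so $R^2g_*\mathcal{O}_X = 0$. By cohomology and base change, $R^1g_*\mathcal{O}_X$ then commutes with base change, so
\[
(R^1g_*\mathcal{O}_X) \otimes k(b) \cong H^1(X_b,\mathcal{O}_{X_b}) = H^1(mF,\mathcal{O}_{mF}).
\]
The left side has dimension $1 + \dim(\mathcal{T}\otimes k(b))$, and by Nakayama $\mathcal{T}\otimes k(b) \neq 0$ exactly when $\mathcal{T}_b \neq 0$. Hence $\mathcal{T}_b \neq 0$ if and only if $h^1(\mathcal{O}_{mF}) \geq 2$. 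By the first fact this is equivalent to $h^0(\mathcal{O}_{mF}) \geq 2$.

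\textbf{(2)$\Rightarrow$(3).} Argue by contraposition: assume $\nu = m$. For $1 \leq n < m$, the sheaf $\mathcal{O}_F(-nF)$ is a nontrivial torsion line bundle on $F$. Since $F$ is of canonical type, such a bundle has no sections, and by Serre duality with $\omega_F \cong \mathcal{O}_F$ it also has no $H^1$. The exact sequences therefore give $h^0(\mathcal{O}_{(n+1)F}) = h^0(\mathcal{O}_{nF})$ for $1 \leq n < m$. Inductively $h^0(\mathcal{O}_{mF}) = h^0(\mathcal{O}_F) = 1$, which contradicts (2).

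\textbf{(3)$\Rightarrow$(2).} Assume $\nu < m$. The same computation gives $h^0(\mathcal{O}_{\nu F}) = h^1(\mathcal{O}_{\nu F}) = 1$. At the step $n = \nu$ the kernel $\mathcal{O}_F(-\nu F) \cong \mathcal{O}_F$ is trivial, and the long exact sequence reads
\[
0 \to H^0(\mathcal{O}_F) \to H^0(\mathcal{O}_{(\nu+1)F}) \to H^0(\mathcal{O}_{\nu F}) \xrightarrow{\delta} H^1(\mathcal{O}_F) \to H^1(\mathcal{O}_{(\nu+1)F}) \to H^1(\mathcal{O}_{\nu F}) \to 0.
\]
The connecting map $\delta$ kills the constants, so $h^0(\mathcal{O}_{(\nu+1)F}) = 2$.

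\textbf{Main obstacle.} It remains to carry this jump up to $mF$. Sections need not lift from $nF$ to $(n+1)F$, so I would not try to lift; I would argue on $H^1$ instead. On a curve $H^2$ vanishes, so the maps $H^1(\mathcal{O}_{(n+1)F}) \to H^1(\mathcal{O}_{nF})$ are surjective. Moreover, in the sequence above the kernel of $H^1(\mathcal{O}_{(\nu+1)F}) \to H^1(\mathcal{O}_{\nu F})$ is the image of $H^1(\mathcal{O}_F) \cong k$, which is nonzero. I would then show that this nonzero class survives in $H^1(\mathcal{O}_{mF})$. The first thing to try is Serre duality on the Gorenstein curve $mF$: since $\omega_{mF} \cong \mathcal{O}_{mF}(K_X + mF)$, it suffices to produce two independent sections of $\mathcal{O}_{mF}(K_X+mF)$. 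These should come from $\mathcal{O}_X(\nu F)$ and $\mathcal{O}_X(mF)$. Both restrict to bundles that are trivial on $F$, and the local equation $t$ of $b$, together with a local section cutting out $\nu F$, gives non-proportional sections on $mF$.

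If that fails, the fallback is to compare with (1). Suppose $\mathcal{T}_b = 0$. Then $R^1g_*\mathcal{O}_X$ is locally free of rank $1$ near $b$, and $h^1$ is constant on the fibers of the family over ${\rm Spec}\, \mathcal{O}_{B,b}/t^k$. That constancy should force $\mathcal{O}_F(F)$ to have order exactly $m$, as in \cite[Proposition 4]{BombieriMumford}.
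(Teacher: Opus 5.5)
The paper gives no proof of this lemma; it cites \cite[Proposition 4]{BombieriMumford}. Your arguments for (1)$\Leftrightarrow$(2) and for (2)$\Rightarrow$(3) are correct, and so is your computation $h^0(\mathcal{O}_{(\nu+1)F}) = h^1(\mathcal{O}_{(\nu+1)F}) = 2$.

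As written, however, (3)$\Rightarrow$(2) is not finished. You leave the passage from $(\nu+1)F$ to $mF$ as an open ``main obstacle'', and neither proposed route closes it.
\begin{itemize}
\item The Serre-duality plan never produces two sections of the single bundle $\omega_{mF}\cong\mathcal{O}_{mF}(K_X+mF)$. It mixes sections of two different bundles, $\mathcal{O}_{mF}(\nu F)$ and $\mathcal{O}_{mF}(mF)$. Moreover, $g^*t$ vanishes identically on $mF$, and a local equation of $\nu F$ is not a global section.
\item The fallback defers to \cite[Proposition 4]{BombieriMumford}, which is the very statement being proved, so it is circular.
\end{itemize}

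The obstacle is illusory, and you had already written down everything needed. Nothing has to ``survive'' into $H^1(\mathcal{O}_{mF})$, because the restriction maps go from $mF$ down to $(\nu+1)F$. You noted that each $H^1(\mathcal{O}_{(n+1)F})\to H^1(\mathcal{O}_{nF})$ is surjective, since $H^2(F,\mathcal{O}_F(-nF))=0$. Composing these gives a surjection $H^1(\mathcal{O}_{mF})\twoheadrightarrow H^1(\mathcal{O}_{(\nu+1)F})\cong k^2$. Since $\chi(\mathcal{O}_{mF})=0$, this gives $h^0(\mathcal{O}_{mF})=h^1(\mathcal{O}_{mF})\ge 2$.

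Equivalently: $\chi(\mathcal{O}_{nF})=0$ for all $n$ and $h^1(\mathcal{O}_{nF})$ is non-decreasing in $n$, so $h^0(\mathcal{O}_{nF})$ is non-decreasing too. The fact that sections need not lift therefore does not matter for the dimension count. With this line added, your thickening argument is a complete, self-contained proof along the lines of Bombieri--Mumford's.
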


\section{Surfaces of maximal Albanese dimension}
In this section, we prove a characteristic-free version of \cite[Proposition 4.5]{Kollar}:

\begin{theorem} \label{thm: main}
Let $X$ be a surface of maximal Albanese dimension. Then, either $X$ is birational to an Abelian surface or $p_2(X) \geq 2$.
\end{theorem}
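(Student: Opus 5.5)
The plan is to go through the Enriques--Kodaira classification and rule out each Kodaira dimension in turn. Let $f \colon X \to A$ be generically finite onto its image in an Abelian variety. Since $p_2$ and maximal Albanese dimension are birational invariants, and any morphism from a blow-up to $A$ factors through the blow-down by \autoref{lem: contract}, we may assume $X$ is minimal.

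\textbf{Step 1: $\kappa(X)\le 0$.} A ruled surface is covered by rational curves. By \autoref{lem: contract} these are contracted by $f$, so $f(X)$ has dimension at most $1$, a contradiction. Hence $\kappa(X)\ge 0$. If $\kappa(X)=0$, we use the Bombieri--Mumford classification. K3 and Enriques surfaces have $b_1=0$, so every map to an Abelian variety is constant. For (quasi-)bielliptic surfaces the Albanese map is a fibration onto an elliptic curve, so again the Albanese dimension is at most $1$. The only remaining case is $X$ Abelian. Thus from now on $\kappa(X)\ge 1$, and it suffices to show $p_2(X)\ge 2$.

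\textbf{Step 2: $\kappa(X)=1$.} Here $X$ carries a genus $1$ fibration $g\colon X\to B$. It cannot be quasi-elliptic, since its fibers would be rational curves contracted by $f$. The fibers are elliptic curves mapping finitely onto translates of a fixed elliptic curve $E\subset A$, by rigidity. Hence $X\to A\to A/E$ factors through a nonconstant map $B\to A/E$, which gives $g(B)\ge 1$.
\begin{itemize}
\item Writing $M=\mathcal{L}^{-1}\otimes\omega_B$, formula \eqref{eq: canonical} gives $\deg M = 2g(B)-2+\chi(\mathcal{O}_X)+\mathrm{length}(\mathcal{T})$.
\item We have $\chi(\mathcal{O}_X)\ge 0$, because $K_X^2=0$ and $c_2(X)\ge 0$ for an elliptic fibration (use \eqref{eq: Noether}).
\item $p_2(X)=h^0\bigl(B, M^{2}(\textstyle\sum_b \lfloor 2a_b/m_b\rfloor b)\bigr)$.
\end{itemize}
If $g(B)\ge 2$, this divisor has degree at least $4g(B)-4\ge 2g(B)$, so Riemann--Roch gives at least $g(B)+1\ge 3$ sections. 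If $g(B)=1$, then $\kappa=1$ forces $\deg M+\sum a_b/m_b>0$. I need the degree $2\deg M+\sum\lfloor 2a_b/m_b\rfloor$ to be at least $2$, since on an elliptic curve a degree $d\ge 1$ line bundle has $d$ sections.
\begin{itemize}
\item When $\deg M\ge 1$ this is automatic.
\item When $\deg M=0$, we have $\chi=0$ and $\mathcal{T}=0$, so $a_b=m_b-1$ for every $b$ and $\lfloor 2a_b/m_b\rfloor=1$. I then need at least two multiple fibers.
\end{itemize}
To rule out exactly one multiple fiber, I would use the map $B\to A/E$. Composing with the isogeny structure, $X$ maps onto $E$, and the restriction to a multiple fiber $mF$ has degree divisible by $m$. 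I expect this to force a second multiple fiber, or to show $g$ is isotrivial with a contradiction to $\kappa=1$. This subcase is one likely obstacle.

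\textbf{Step 3: $\kappa(X)=2$.} Riemann--Roch gives
\[
p_2(X)\ \ge\ \chi(\omega_X^{2}) - h^2(\omega_X^{2}) = K_X^2+\chi(\mathcal{O}_X),
\]
since $h^2(\omega_X^{2})=h^0(\omega_X^{-1})=0$. Because $K_X^2\ge 1$, it suffices to show $\chi(\mathcal{O}_X)\ge 1$, or to get $p_2\ge 2$ by another route. This is where characteristic $p$ bites: $\chi(\mathcal{O}_X)\le 0$ can occur, à la Raynaud. I expect this to be the main obstacle.

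My first attempt would use the Albanese map directly. If $f$ is separable, pulling back a nonzero $2$-form on $A$ gives a nonzero section $s$ of $\omega_X$. If moreover $p_1(X)\ge 2$, multiplying by $s$ gives $p_2\ge 2$. If $p_1=1$, then $K_X$ equals the effective ramification divisor $R$. I would then produce a second section of $2K_X$ from products of pulled-back $1$-forms $f^*\alpha\wedge f^*\beta$ for different pairs $\alpha,\beta$; these should not all be proportional on a surface of maximal Albanese dimension.

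If $f$ is inseparable, I would factor it through a Frobenius-type purely inseparable map and argue on the separable part. Alternatively, I would pass to the étale covers $X\times_{A,[n]}A$ with $(n,p)=1$, which multiply $K^2$ and $\chi$ by $n^{4}$, to control the sign of $\chi$.
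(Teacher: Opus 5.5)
Your reduction follows the paper's skeleton and is correct as far as it goes: minimal model, $\kappa(X)\ge 0$ by \autoref{lem: contract}, classification for $\kappa=0$, and the canonical bundle formula for $\kappa=1$. Your route to the critical case, via $c_2(X)\ge 0$ and Riemann--Roch on $B$ rather than the smooth Jacobian and Katsura--Ueno, is also fine. However, the two places you flag as obstacles are exactly where the content of the theorem lies, and neither is closed.

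The central gap is the last subcase of Step 2. There $g(B)=1$, $\chi(\mathcal{O}_X)=0$, $\mathcal{T}=0$, and there is exactly one tame multiple fiber $mF$. So $\omega_X\cong g^*\mathcal{L}^{-1}\otimes\mathcal{O}_X((m-1)F)$ with $\deg\mathcal{L}=0$ and $p_2(X)=1$. Your idea via $B\to A/E$ yields no contradiction. That $m$ divides the degree of a general fiber over $E$ is harmless. Isotriviality is automatic here (all fibers are isogenous to $E$) and is compatible with $\kappa=1$.

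The missing idea is to twist by an element of ${\rm Pic}^0$. The restriction ${\rm Pic}^0_{X/k}\to{\rm Pic}^0_{F/k}$ is surjective, because $h^2(\mathcal{O}_X(-F))=h^0(B,\mathcal{L}^{-1}(g(F)))=1=h^2(\mathcal{O}_X)$ makes $H^1(\mathcal{O}_X)\to H^1(\mathcal{O}_F)$ onto. Hence there is $\mathcal{M}\in{\rm Pic}^0_{X/k}$ with $\mathcal{M}|_F\cong\mathcal{O}_F(-F)$, of exact order $m$ by \autoref{lem: wild}. Since $\chi(\mathcal{M})=\chi(\mathcal{O}_F)=0$, the sequence $0\to\mathcal{M}\to\mathcal{M}(F)\to\mathcal{O}_F\to 0$ and Serre duality force one of $\mathcal{M}(F)$, $\omega_X\otimes\mathcal{M}^{-1}(-F)$, $\omega_X\otimes\mathcal{M}^{-1}$ to be effective. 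The resulting divisor is vertical. Restricting it to $F$ gives a congruence modulo $m$ for its coefficient along $F$. On the other hand, $\deg_H\mathcal{M}=0$ for an ample $H$ gives an incompatible congruence. As the paper remarks, non-algebraic analytic surfaces with these invariants exist, so projectivity must enter somewhere; here it enters through $H$.

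Step 3 is likewise open. The paper does not handle it by hand: it invokes Ekedahl's results on surfaces of general type in positive characteristic (his Corollary 1.8) to exclude $\kappa(X)=2$ when $p_2(X)\le 1$. Your proposed substitutes do not work:
\begin{itemize}
\item The \'etale covers $X\times_{A,[n]}A$ multiply $K^2$ and $\chi(\mathcal{O}_X)$ by the same factor, so they cannot change the sign of $\chi$. A bound on $p_2$ of the cover also says nothing about $p_2(X)$.
\item If $p_1(X)=1$, every $f^*\alpha\wedge f^*\beta$ is a multiple of the single section $s$. Products of such forms therefore only give multiples of $s^2$ in $H^0(X,\omega_X^{\otimes 2})$.
\item The inseparable case is not addressed at all.
\end{itemize}
As it stands, the proposal proves the theorem only for $\kappa(X)\le 0$, and for $\kappa(X)=1$ outside the one-multiple-fiber case.
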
 

\begin{proof}
By rigidity, we may assume that $X$ is minimal.
As $X$ is of maximal Albanese dimension, it is not birationally uniruled by \autoref{lem: contract}, hence $\kappa(X) \geq 0$ (one could also deduce this from \cite[Theorem C]{Baudin_Positive_characteristic_generic_vanishing_theory}).

Now, seeking a contradiction, assume that $X$ is not an Abelian surface and $p_2(X) \leq 1$. By \cite[Corollary 1.8]{Ekedahl_Canonical_models_of_surfaces_of_general_type_in_positive_characteristic}, $X$ is not of general type. Note also that $\kappa(X) \neq 0$ by either classification of surfaces \cite{BombieriMumford} or by \cite[Theorem 0.2]{Hacon_Patakfalvi_Zhang_Bir_char_of_AVs}. We therefore have  $\kappa(X) = 1$, so $X$ admits a canonical genus $1$ fibration $g\colon X \to B$.
As $X$ has maximal Albanese dimension, the general fiber of $g$ is not rational by \autoref{lem: contract}, so $g$ is generically smooth.

In fact, again because $X$ has maximal Albanese dimension, no fiber of $g$ is contracted by $f$. As the underlying reduced scheme of a fiber of $g$ is either smooth or a union of rational curves, \autoref{lem: contract} implies that it is smooth. Thus, by \cite[Theorem 6.6]{LiuLorenziniRaynaud}, the associated Jacobian fibration $J(g)\colon J(X) \to B$ is smooth and hence $\chi(\mathcal{O}_{J(X)}) = 0$. By \cite[Theorem 3.1]{LiuLorenziniRaynaud} and the Leray spectral sequence for $\mathcal{O}_X$ with respect to $g$, we have $\chi(\mathcal{O}_{J(X)}) = \chi(\mathcal{O}_X)$. As $p_1(X) \leq 1$ and $h^1(X,\mathcal{O}_X) \geq 2$, we deduce that $h^1(X,\mathcal{O}_X) = 2$, $p_1(X) = 1$, $b_1(X) = 4$ and $b_2(X) = 6$. By \cite[Lemma 3.4]{KatsuraUeno}, $b_1(X) = 4$ and the fact that the Albanese map of $X$ does not contract fibers of $g$ implies that $g(B) = 1$.

Now, consider the canonical bundle formula \eqref{eq: canonical}.
Since $p_2(X) \leq 1$, the tensor square of the right-hand side cannot contain the pullback of a line bundle of degree at least $2$ on $B$. This implies that $\mathcal{T}$ is trivial and that there is at most one multiple fiber $F$, say of multiplicity $m$, and the coefficient $a$ in the canonical bundle formula is $m-1$. If $m = 1$, then $\omega_X \cong g^*(\mathcal{L}^{-1})$ for $\mathcal{L}$ of degree $1$, so $K_X \equiv 0$ and $X$ is an Abelian surface, a contradiction.

Thus, assume that $m > 1$ and write $\omega_X \cong g^* \mathcal{L}^{-1} \otimes_{\mathcal{O}_X} \mathcal{O}_X((m-1)F)$ with ${\rm deg}(\mathcal{L}) = 0$. The inclusion $\iota \colon  F \hookrightarrow X$ induces a morphism of identity components of Picard schemes $\iota^* \colon {\rm Pic}_{X/k}^0 \to {\rm Pic}^0_{F/k}$. We claim that $\iota^*$ is surjective. As $\iota^*$ is a morphism of Abelian varieties and the target is an elliptic curve, it suffices to show that it has non-zero differential. This differential is induced by the long exact sequence in cohomology associated to
\begin{equation}\label{equation_exact_seq}
    0 \to \mathcal{O}_X(-F) \to \mathcal{O}_X \to \mathcal{O}_F \to 0.
\end{equation}
By Serre duality and the projection formula, 
$$
h^2(X,\mathcal{O}_X(-F) ) = h^0(X,g^* \mathcal{L}^{-1} \otimes \mathcal{O}_X(mF)) = h^0(B, \mathcal{L}^{-1}(g(F))) = 1,
$$
because ${\rm deg}(\mathcal{L}^{-1}(g(F))) = 1$. As $h^2(X,\mathcal{O}_X) = 1$, this implies that $H^1(X,\mathcal{O}_X) \to H^1(F,\mathcal{O}_F)$ is surjective, as desired.

By \autoref{lem: wild}, $\mathcal{O}_F(-F)$ corresponds to a point of exact order $m$ on ${\rm Pic}_{F/k}^0$.  We thus find an invertible sheaf $\mathcal{M} \in {\rm Pic}^0_{X/k}$ with $\mathcal{M}|_{F} \cong \mathcal{O}_F(-F)$. 
We now tensor \autoref{equation_exact_seq} with $\mathcal{M}(F)$:
\begin{equation} \label{eq: ses2}
0 \to \mathcal{M} \to \mathcal{M}(F) \to \mathcal{O}_F \to 0.
\end{equation}
It remains to show that all of the following three cases lead to a contradiction:
\begin{enumerate}
\item $h^0(X,\mathcal{M}(F)) \neq 0$.
\item $h^2(X,\mathcal{M}(F)) \neq 0$.
\item $h^0(X,\mathcal{M}(F)) = h^2(X,\mathcal{M}(F)) = 0$
\end{enumerate}

Case $(1)$: If $h^0(X,\mathcal{M}(F)) \neq 0$, pick an effective divisor $D$ with $\mathcal{O}_X(D) \cong \mathcal{M}(F)$. Then, $D.F = {\rm deg}(\mathcal{M}(F)|_F) = 0$, so $D$ is supported on fibers of $g$. Write $D = \sum_{b \in B} c_bF_b + cF$ with $c,c_b \geq 0$. Then,
$
\mathcal{M} \cong \mathcal{O}_X(\sum_{b \in B} c_bF_b + (c-1)F),
$
and hence
$$
\mathcal{O}_F(-F) \cong \mathcal{M}|_F \cong \mathcal{O}_F((c-1)F).
$$
Thus, $m \mid c$. On the other hand, since $\mathcal{M} \in {\rm Pic}^0_{X/k}$, its restriction to every curve on $X$ has degree $0$. In particular, we can take an ample divisor $H$ on $X$ and get
$$
0 = {\rm deg}_H(\mathcal{M}|_H) = \left( m\sum_{b \in B} c_b + c-1\right) H.F,
$$
so that $$m \sum_{b \in B} c_b + c-1 = 0.$$
Since $m > 1$ and $m \mid c$, the left-hand side is not divisible by $m$, a contradiction.
\smallskip

Case $(2)$: If $h^2(X,\mathcal{M}(F)) \neq 0$, then $h^0(X,\mathcal{M}^{-1}((m-2)F) \otimes g^*\mathcal{L}^{-1}) \neq 0$ by Serre duality. Pick an effective divisor $D$ with $\mathcal{O}_X(D) \cong \mathcal{M}^{-1}((m-2)F) \otimes g^*\mathcal{L}^{-1}$. As before, $D.F = 0$, so $D$ is supported on fibers and we can write $D \sim  \sum_{b \in B} c_bF_b + cF$. Then,
$$
\mathcal{M} \cong g^* \mathcal{L}^{-1} \left(-\sum_{b \in B}c_bF_b + (m-2-c)F\right),
$$
and hence
$$
\mathcal{O}_F(-F) \cong \mathcal{M}|_F \cong \mathcal{O}_F((m-2-c)F).
$$
Thus, $m \mid (c+1)$. Intersecting with an ample divisor as in the previous case, we obtain
$$
m\sum_{b \in B} c_b +  m-2-c = 0.
$$
Since $m > 1$ and $m \mid (c+1)$, the left-hand side is not divisible by $m$, a contradiction.
\smallskip

Case $(3)$: If $h^0(X,\mathcal{M}(F)) = h^2(X,\mathcal{M}(F)) = 0$, we use that $\chi(\mathcal{M}) = \chi(\mathcal{O}_F) = 0$, so that the short exact sequence \eqref{eq: ses2} shows that also $h^1(X,\mathcal{M}(F)) = 0$. But then, by \eqref{eq: ses2}, we have $h^0(X,\mathcal{M}^{-1}((m-1)F) \otimes g^*\mathcal{L}^{-1}) =h^2(X,\mathcal{M}) =1$. Pick an effective divisor $D$ with $\mathcal{O}_X(D) \cong \mathcal{M}^{-1}((m-1)F) \otimes g^*\mathcal{L}^{-1}$. Similarly to the previous cases, we write $D \sim \sum_{b \in B} c_bF_b + cF$ and we obtain $m \mid c$ and
$$
m\sum_{b \in B} c_b + m-c-1 = 0,
$$
thus leading to a final contradiction in the case where $m > 1$. This finishes the proof. 
\end{proof}

\begin{remark}
Note that in the above proof, in order to show that there does not exist an elliptic surface $g \colon X \to B$ with $g(B) =1$ and exactly one tame multiple fiber, we use in an an essential way the existence of an ample divisor on $X$, or equivalently, the fact that $X$ is an algebraic surface. In fact, one can produce non-algebraic (complex analytic or rigid analytic) elliptic surfaces with these invariants using logarithmic transformations \cite{Kodaira,Mitsui}.
\end{remark}

\section{Surfaces with $p_2(X) = 1$}
We begin with the following observation on the coherent Euler characteristic of surfaces with $p_2(X) = 1$.

\begin{proposition} \label{lem: chi1}
Let $X$ be a smooth projective surface with $p_2(X) = 1$. Then,
$$
0 \leq \chi(\mathcal{O}_X) \leq 2.
$$
\end{proposition}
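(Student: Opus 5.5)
The upper bound will come from Riemann–Roch and the lower bound from the classification by Kodaira dimension. Since plurigenera and $\chi(\mathcal{O}_X)$ are birational invariants, we may assume $X$ is minimal. As $p_2(X) = 1 > 0$, $X$ is not birationally ruled, so $\kappa(X) \geq 0$ and $K_X$ is nef.

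For the upper bound, apply Riemann--Roch to $2K_X$:
$$
\chi(\omega_X^{\otimes 2}) = \chi(\mathcal{O}_X) + \tfrac{1}{2}(2K_X)(2K_X - K_X) = \chi(\mathcal{O}_X) + K_X^2 .
$$
By Serre duality, $h^2(X,\omega_X^{\otimes 2}) = h^0(X,\omega_X^{-1})$. This vanishes unless $\omega_X$ is torsion of order dividing $2$ (i.e.\ $\kappa(X)=0$ and $K_X\equiv 0$ with $-K_X$ effective). Hence:
\begin{itemize}
\item If $\omega_X$ is not $2$-torsion, then $\chi(\mathcal{O}_X) + K_X^2 \leq p_2(X) = 1$, and since $K_X^2 \geq 0$ by nefness, $\chi(\mathcal{O}_X) \leq 1$.
\item If $\omega_X^{\otimes 2} \cong \mathcal{O}_X$, then $K_X^2 = 0$ and $\chi(\mathcal{O}_X) = \chi(\omega_X^{\otimes 2}) \leq h^0 + h^2 = 2$.
\end{itemize}
(In the second case one can also simply read off from the classification that $\chi(\mathcal{O}_X) \in \{0,1,2\}$ for $\kappa = 0$.)

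For the lower bound, split by Kodaira dimension.
\begin{itemize}
\item $\kappa(X) = 0$: the Bombieri--Mumford classification gives $\chi(\mathcal{O}_X) \in \{0,1,2\}$ (abelian, (quasi-)bielliptic, Enriques, K3).
\item $\kappa(X) = 2$: $\chi(\mathcal{O}_X) > 0$ for minimal surfaces of general type. This is classical in characteristic $0$ and holds in positive characteristic as well by Shepherd-Barron and Ekedahl; we can also cite \cite[Corollary 1.8]{Ekedahl_Canonical_models_of_surfaces_of_general_type_in_positive_characteristic}, which already shows $p_2 \geq 2$, so this case does not occur at all.
\item $\kappa(X) = 1$: there is an elliptic or quasi-elliptic fibration $g \colon X \to B$, $K_X^2 = 0$, and by \eqref{eq: Noether} $12\chi(\mathcal{O}_X) = c_2(X)$. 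For a genus $1$ fibration, $c_2(X)$ is the sum of the local Euler number contributions of the singular fibers, which are nonnegative (including the wild conductor contributions). Equivalently, the canonical bundle formula \eqref{eq: canonical} gives $\chi(\mathcal{O}_X) = \deg \mathcal{L}^{-1} \geq 0$ via $\deg R^1g_*\mathcal{O}_X \leq 0$. Either way $\chi(\mathcal{O}_X) \geq 0$.
\end{itemize}

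The main obstacle is justifying $\chi(\mathcal{O}_X) \geq 0$ in the $\kappa = 1$ case in positive characteristic, especially for quasi-elliptic fibrations and wild fibers. I would cite Bombieri--Mumford, where $\chi(\mathcal{O}_X) \geq 0$ for genus $1$ fibrations on surfaces with $\kappa \geq 0$ is established, rather than reprove it.
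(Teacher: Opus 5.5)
\textbf{Gap in the case $\kappa(X)=1$.} Your upper bound is correct. The cases $\kappa(X)=0$ and $\kappa(X)=2$ are also fine; for the latter, the Ekedahl citation is the justification that actually works, since it makes the case vacuous. The problem is the lower bound when $\kappa(X)=1$. There you claim $\chi(\mathcal{O}_X)\ge 0$ for every minimal surface with a genus $1$ fibration, without ever using $p_2(X)=1$. In characteristics $2$ and $3$ this is false.

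For a quasi-elliptic fibration $g\colon X\to B$, the geometric generic fibre is a cuspidal rational curve, so $e(F_{\bar\eta})=2$. The Euler number formula therefore has a global term:
\[
c_2(X)=2\bigl(2-2g(B)\bigr)+\sum_{s}\bigl(e(F_s)-2\bigr),
\]
and this term is negative once $g(B)\ge 2$. Your description of $c_2(X)$ as a sum of nonnegative local contributions is valid only for elliptic fibrations, where $e(F_{\bar\eta})=0$.

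Concrete counterexamples come from Raynaud's construction. Over a Tango curve, take a double cover (for $p=3$) or a triple cover (for $p=2$) of the associated $\mathbb{P}^1$-bundle, branched along a section and the purely inseparable curve. The result is a quasi-elliptic surface all of whose fibres are irreducible cuspidal curves. Hence $12\chi(\mathcal{O}_X)=4-4g(B)<0$, while $\kappa(X)=1$.

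Your second justification fails as well. Leray gives $\chi(\mathcal{O}_X)=\deg\mathcal{L}^{-1}-\mathrm{length}(\mathcal{T})$, not $\deg\mathcal{L}^{-1}$. Moreover, the positivity $\deg\mathcal{L}^{-1}\ge 0$ is not available in positive characteristic: it fails in the example above, where $\mathcal{T}=0$. So there is no Bombieri--Mumford statement of the kind you want to cite; the claim is false for quasi-elliptic fibrations.

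\textbf{What is missing, and how to fix it.} The hypothesis $p_2(X)=1$ has to enter in the $\kappa=1$ case. The paper splits according to whether the Albanese map contracts a fibre of the Iitaka fibration.
\begin{itemize}
\item If it does not, the Jacobian fibration is smooth and $\chi(\mathcal{O}_X)=0$.
\item If it does, then $g(B)=\tfrac12 b_1(X)$. When $b_1(X)=0$, one gets $c_2(X)>0$. Otherwise the paper combines \eqref{eq: canonical}, Riemann--Roch on $B$, the bound $h^1(X,\mathcal{O}_X)-p_1(X)\le\tfrac12 b_1(X)$, and Noether's $b_1(X)\ge 1-6\chi(\mathcal{O}_X)$. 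This yields
\[
p_2(X)\ \ge\ 2\,\mathrm{length}(\mathcal{T})-3\chi(\mathcal{O}_X)-\tfrac12,
\]
so $p_2(X)=1$ forces $\chi(\mathcal{O}_X)\ge 0$.
\end{itemize}

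Alternatively, you can repair your own argument along its lines. Your Euler-number argument is fine for elliptic fibrations. For a quasi-elliptic fibration with $\chi(\mathcal{O}_X)<0$, use that $e(F_s)\ge 2$ for every fibre; the formula above then gives $g(B)-1\ge -3\chi(\mathcal{O}_X)$. Riemann--Roch for $(\mathcal{L}^{-1}\otimes\omega_B)^{\otimes 2}$ in \eqref{eq: canonical} then gives
\[
p_2(X)\ \ge\ 3g(B)-3+2\chi(\mathcal{O}_X)\ \ge\ -7\chi(\mathcal{O}_X)\ \ge\ 7,
\]
which contradicts $p_2(X)=1$.
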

\begin{proof}
The equality $p_2(X) = 1$ implies $\kappa(X) \geq 0$ and $p_1(X) \leq 1$. In particular, $\chi(\mathcal{O}_X) \leq 2$ is clear and we may assume without loss of generality that $X$ is minimal.

If $\kappa(X) = 2$, the claim holds by \cite[Theorem 1.5]{slope} and if $\kappa(X) = 0$, the claim holds by classification. 
Thus, we may assume that $\kappa(X) = 1$ and consider the Iitaka fibration $g\colon X \to B$ and the Albanese map $a\colon X \to A$.

If no fiber of $g$ is contracted by $a$, then, as in the proof of \autoref{thm: main}, the Jacobian $J(g)\colon J(X) \to B$ of $g$ is smooth, so $0 = \chi(\mathcal{O}_{J(X)}) = \chi(\mathcal{O}_X)$.

If some fiber of $g$ is contracted by $a$, then $g(B) = \frac{1}{2}b_1(X)$ by \cite[Lemma 3.4]{KatsuraUeno}. If in addition $b_1(X) = 0$, then $c_2(X) > 0$, so $\chi(\mathcal{O}_X) > 0$ by \eqref{eq: Noether}. We may therefore assume that $b_1(X) > 0$. Write $R^1g_* \mathcal{O}_X = \mathcal{L} \oplus \mathcal{T}$ with $\mathcal{L}$ invertible and $\mathcal{T}$ torsion. By \eqref{eq: canonical} and Riemann--Roch on the base curve $B$, we have
\begin{eqnarray*}
\chi((\mathcal{L}^{-1} \otimes \omega_B)^{\otimes 2}) & = & 3g(B) - 3 + 2\chi(\mathcal{O}_X) + 2{\rm length}(\mathcal{T}) \\
&=&\frac{3}{2}b_1(X) - 2h^1(X,\mathcal{O}_X) + 2p_1(X) + 2{\rm length}(\mathcal{T}) -1
\end{eqnarray*}
Recall that $H^2(X,\mathcal{O}_X)$ is an obstruction space for deformations of line bundles on $X$, so that
 $$
h^1(X,\mathcal{O}_X) - p_1(X)\leq \frac{1}{2}b_1(X).
$$
Thus, we have
$$
\chi((\mathcal{L}^{-1} \otimes \omega_B)^{\otimes 2})  \geq \frac{1}{2}b_1(X) + 2 {\rm length}(\mathcal{T}) - 1.
$$
Again by \eqref{eq: canonical}, we have
\begin{equation} \label{eq: p2ineq}
p_2(X) \geq h^0(B,(\mathcal{L}^{-1} \otimes \omega_B)^{\otimes 2}) \geq \chi((\mathcal{L}^{-1} \otimes \omega_B)^{\otimes 2}) \geq \frac{1}{2}b_1(X) + 2 {\rm length}(\mathcal{T}) - 1
\end{equation}
Now, \eqref{eq: Noether} yields the inequality $b_1(X) \geq 1 - 6 \chi(\mathcal{O}_X)$ and thus
\begin{equation} \label{eq: nowild}
 p_2(X) \geq  2 {\rm length}(\mathcal{T}) - 3 \chi(\mathcal{O}_X) - \frac{1}{2}.
\end{equation}
Hence, $p_2(X) = 1$ implies $\chi(\mathcal{O}_X) \geq 0$.
\end{proof}

Enriques' characterization theorem is about singling out Abelian surfaces among minimal surfaces with $\chi(\mathcal{O}_X) = 0$ and $p_2(X) = 1$, so let us finish this section by studying this case in detail.

\begin{proposition} \label{lem: chi2}
Let $X$ be a minimal smooth projective surface with $p_2(X) = 1$ and $\chi(\mathcal{O}_X) = 0$. Then, one of the following holds:
\begin{enumerate}
\item $b_1(X) = 4$ and $X$ is an Abelian surface.
\item $b_1(X) = 2$ and there is a genus $1$ fibration $g \colon X \to B$ with $g(B) \in \{0,1\}$. In this case:
\begin{itemize}
\item If $g(B) = 0$, then the Jacobian of $g$ is smooth.
\item If $g(B) = 1$, then $g$ is the Albanese map of $X$ and admits no wild fibers.
\end{itemize}
\end{enumerate}
\end{proposition}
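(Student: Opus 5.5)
We split according to the Kodaira dimension. The assumption $p_2(X)=1$ forces $\kappa(X)\geq 0$ and $p_1(X)\leq 1$, and by \cite[Corollary 1.8]{Ekedahl_Canonical_models_of_surfaces_of_general_type_in_positive_characteristic} (or the slope inequality \cite{slope}) a minimal surface of general type cannot have $\chi(\mathcal{O}_X)=0$ and $p_2(X)=1$. So $\kappa(X)\in\{0,1\}$.

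\textbf{Case $\kappa(X)=0$.} We use the Bombieri--Mumford classification. Among minimal surfaces with $\kappa=0$, those with $\chi(\mathcal{O}_X)=0$ are exactly the Abelian and (quasi-)bielliptic surfaces, with $b_1=4$ and $b_1=2$ respectively.
\begin{itemize}
\item An Abelian surface gives (1).
\item A (quasi-)bielliptic surface gives (2). Its Albanese map is a genus $1$ fibration onto an elliptic curve $B$, and all its fibers are smooth or quasi-elliptic fibers with no multiplicities coming from wild behaviour. We would check from the explicit description as a quotient $(E\times C)/G$ that $\mathcal{T}=0$, so there are no wild fibers by \autoref{lem: wild}.
\end{itemize}

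\textbf{Case $\kappa(X)=1$.} Let $g\colon X\to B$ be the Iitaka fibration. We follow the proof of \autoref{lem: chi1} with $\chi(\mathcal{O}_X)=0$.

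\emph{Some fiber is contracted by the Albanese map.} Then $g(B)=\tfrac12 b_1(X)$ by \cite[Lemma 3.4]{KatsuraUeno}. Inequality \eqref{eq: p2ineq} gives
\[
1=p_2(X)\geq \tfrac12 b_1(X)+2\,{\rm length}(\mathcal{T})-1,
\]
so $b_1(X)\leq 4-4\,{\rm length}(\mathcal{T})$. Noether's formula \eqref{eq: Noether} with $K_X^2=0$ and $\chi=0$ gives $c_2=0$, so $b_1=b_2/2+1\geq 1$, hence $b_1$ is even and positive, i.e. $b_1\in\{2,4\}$.
\begin{itemize}
\item If $b_1=4$, then $g(B)=2$. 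Then the canonical bundle formula gives $\deg(\mathcal{L}^{-1}\otimes\omega_B)\geq 2$, so $p_2(X)\geq 3$, a contradiction.
\item Hence $b_1=2$, $g(B)=1$ and $\mathcal{T}=0$. The Albanese variety is one-dimensional and the Albanese map factors through $g$, so $g$ is the Albanese map, and $\mathcal{T}=0$ means there are no wild fibers.
\end{itemize}

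\emph{No fiber is contracted by the Albanese map $a$.} First suppose $a$ is generically finite onto its image. Then $X$ has maximal Albanese dimension and \autoref{thm: main} contradicts $p_2=1$ with $\kappa=1$. Otherwise $a$ maps to a curve; since fibers of $g$ are not contracted, the Albanese fibration is transverse to $g$. As in the proof of \autoref{thm: main}, all fibers of $g$ have smooth support by \autoref{lem: contract}, and the Jacobian of $g$ is smooth by \cite[Theorem 6.6]{LiuLorenziniRaynaud}. It remains to bound $g(B)$ and $b_1$.
\begin{itemize}
\item Since the fibers of $g$ are elliptic curves dominating the Albanese curve $C$, we have $g(C)\leq 1$, so $b_1=2$.
\item We must still show $g(B)\in\{0,1\}$. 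For this we use the canonical bundle formula: $\deg(\mathcal{L}^{-1}\otimes\omega_B)=2g(B)-2+{\rm length}(\mathcal{T})$. If $g(B)\geq 2$, this degree is at least $2$, forcing $p_2\geq 3$ by Riemann--Roch, a contradiction.
\item If $g(B)=1$, then $B$ is itself an image of $X$ in an elliptic curve, so $b_1\geq$ the contribution of both $B$ and $C$. This contradicts $b_1=2$ unless $g$ coincides with the Albanese map. That contradicts our assumption that no fiber is contracted, so in fact $g(B)=0$ in this subcase, with smooth Jacobian, as claimed.
\end{itemize}

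\textbf{Main obstacle.} We expect the delicate step to be this last subcase: excluding $b_1=4$ and pinning down $g(B)$ when the Albanese image is a curve transverse to $g$. This requires carefully using \cite[Lemma 3.4]{KatsuraUeno}-type arguments to relate $b_1$ with the genera of $B$ and of the Albanese curve.
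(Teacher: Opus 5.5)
Your proof is correct. Its skeleton is the paper's: classification for $\kappa(X)=0$, then for $\kappa(X)=1$ a split according to whether the Albanese map $a$ contracts a fibre of the Iitaka fibration $g$. The contracted case is handled essentially as in the paper. The non-contracted case takes a different route.

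\textbf{Non-contracted case.} The paper just invokes \cite[Lemma 3.4]{KatsuraUeno} to get $g(B)=\frac{1}{2}b_1(X)-1$. It then bounds $b_1(X)\in\{2,4\}$ using $\chi(\mathcal{O}_X)=0$ and $p_1(X)\le 1$, and sends $b_1(X)=4$ to \autoref{thm: main}. You instead proceed in four steps:
\begin{enumerate}
\item dispose of the generically finite case with \autoref{thm: main} first;
\item get $b_1(X)=2$ because the elliptic fibres of $g$ dominate the Albanese curve;
\item bound $g(B)\le 1$ with the canonical bundle formula;
\item rule out $g(B)=1$ via the Albanese property.
\end{enumerate}
For the last step, the clean phrasing is: if $g(B)=1$, then $g$ factors through $a$ by the universal property, so $a$ contracts every fibre of $g$. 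Your route is self-contained and avoids that citation, at the cost of a few extra steps. The paper's citation treats both subcases uniformly in one line.

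\textbf{Points where you are more careful.} You address $\kappa(X)=2$ explicitly; the paper's proof of this proposition passes over it, implicitly relying on the Ekedahl argument used in \autoref{thm: main}. In the contracted case, your separate exclusion of $b_1(X)=4$ is genuinely needed. You do this by putting $\chi(\mathcal{O}_X)=0$ into the canonical bundle formula: a degree-$2$ bundle on a genus-$2$ base gives $p_2(X)\ge 3$. The reason it is needed is that \eqref{eq: p2ineq}, in its displayed final form, only yields $b_1(X)\le 4-4\,{\rm length}(\mathcal{T})$.

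\textbf{Minor tidying.} In the $\kappa(X)=0$ case, you can replace ``we would check that $\mathcal{T}=0$'' by a direct observation. The Albanese fibration of a (quasi-)bielliptic surface has all fibres isomorphic, so it has no multiple fibres and hence no wild ones.
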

\begin{proof}
If $\kappa(X) = 0$, the claim follows from classification.
So, let $g \colon X \to B$ be the Iitaka fibration and $a \colon X \to A$ the Albanese map.

If no fiber of $g$ is contracted by $a$, then $g(B) = \frac{1}{2}b_1(X) - 1$ by \cite[Lemma 3.4]{KatsuraUeno} and the Jacobian of $g$ is a smooth morphism. As $p_1(X) \leq 1$, we conclude 
$$
2 = 2h^1(X,\mathcal{O}_X) - 2p_1(X) \leq b_1(X) \leq 2h^1(X,\mathcal{O}_X) = 4.
$$
If $b_1(X) = 2$, we are in Case (2) with $g(B) = 0$. If $b_1(X) = 4$, then $a$ is generically finite by \cite[Lemma 3.4]{KatsuraUeno}, hence $X$ is an Abelian surface by \autoref{thm: main}, that is, we are in Case (1).

If some fiber of $g$ is contracted by $a$, then $g(B) = \frac{1}{2}b_1(X)$ by \cite[Lemma 3.4]{KatsuraUeno}. Formula \eqref{eq: Noether} implies $b_1(X) > 0$ and Equation \eqref{eq: p2ineq} implies $b_1(X) < 4$, hence $b_1(X) = 2$. Again by \cite[Lemma 3.4]{KatsuraUeno}, $g$ can be identified with the Albanese map of $X$.
By \eqref{eq: nowild}, $g$ admits no wild fibers.
\end{proof}

\begin{proof}[Proof of \autoref{prop: mainintro2}]
    We may assume that $X$ is minimal. By \autoref{lem: chi2}, it is enough to show that $\chi(\mathcal{O}_X) = 0$. Since \[ h^1(X, \mathcal{O}_X) = \dim(T_0\Pic^0_X) \geq \dim(T_0(\Pic^0_X)_{\red}) = \frac{b_1(X)}{2} \] (see e.g. \cite[Lemma 1.6]{Roessler_Schroer_Varieties_with_free_tangent_sheaves} for the last equality), we immediately obtain that $\chi(\mathcal{O}_X) \leq 0$. We then conclude the proof by \autoref{lem: chi1}.
\end{proof}

\section{Proof of \autoref{thm: mainintro}}
In this section, we prove \autoref{thm: mainintro}. This is done by combining the results of the previous section with the results of Katsura--Ueno \cite{KatsuraUeno} and Ferrari \cite{Ferrari} that use Raynaud's theory of jumping numbers for wild fibers.

\begin{proof}[Proof of \autoref{thm: mainintro}]
As always, we assume that $X$ is minimal. By \autoref{lem: chi2}, we have to exclude the case where $b_1(X) = 2$ and $X$ admits a genus $1$ fibration $g \colon X \to B$ with $g(B) \in \{0,1\}$ but is not a (quasi-)bielliptic surface with trivial canonical.

First, assume that $g(B) = 0$. 
Then, by \autoref{lem: chi2}, the Jacobian of $g$ is smooth and by \cite[Lemma 2.3]{Ferrari}, it holds that ${\rm length}(\mathcal{T}) = 2$. In particular, there are at most two wild fibers.
Let $F_1,\hdots,F_r$ be the (underlying reduced schemes of the) multiple fibers of $g$. Let $m_i$ be the multiplicity of $F_i$, $\nu_i$ the order of the normal bundle of $F_i$, and $a_i$ the coefficient of $F_i$ in \eqref{eq: canonical}.
By \cite[Corollary 4.2, Proof of Theorem 5.2, Case (III)]{KatsuraUeno}, one has $p_2(X) \geq 2$ unless $r \leq 2$ and all multiple fibers of $g$ are wild with $a_i < m_i-1$.
By \cite[Lemma 2.4, Corollary 4.2]{KatsuraUeno}, we have the following cases:
\begin{enumerate}
    \item $r = 1, \nu_1 = 1, m_1 = p^{\gamma_1}$ with $\gamma_1 \geq 1$, $a_1 = m_1 - 2$.
    \item $r = 1, \nu_1 = 1, m_1 = p^{\gamma_1}$ with $\gamma_1 \geq 1$, $a_1 = m_1 - 3$.
    \item $r = 1, \nu_1 = 1, m_1 = p^{\gamma_1}$ with $\gamma_1 \geq 1$, $a_1 = m_1 - p - 2$.
    \item $r = 2, a_i = m_i - \nu_i - 1$.
\end{enumerate}
Since $\omega_X \not \cong \mathcal{O}_X$, we may assume $a_1 > 0$ and since the case $p \geq 5$ is done in \cite[Proposition 3.6]{Ferrari} (by an explicit computation similar to below), we only need to treat the case $p \in \{2,3\}$. In any case, from \eqref{eq: canonical}, we have
\begin{equation} \label{eq: ineq}
p_i(X) \geq \left(\sum_{i=1}^r \lfloor \frac{i a_i}{m_i} \rfloor\right) +1.
\end{equation}
Let us go through each case separately:
\begin{enumerate}
\item If $p = 3$, we get
$$
p_3(X) \geq  \lfloor 3 (1 - \frac{2}{3^{\gamma_1}})   \rfloor +1\geq \lfloor 3 (1 - \frac{2}{3})   \rfloor +1 = 2
$$
and if $p = 2$, then $a_1 > 0$ implies $\gamma_1 \geq 2$, so we get
$$
p_2(X) \geq \lfloor 2 (1 - \frac{2}{2^{\gamma_1}})   \rfloor +1\geq \lfloor 2 (1 - \frac{1}{2})   \rfloor +1 = 2.
$$
\item If $p = 3$, then $a_1 > 0$ implies $\gamma_1 \geq 2$, so
$$
p_2(X) \geq \lfloor 2 (1 - \frac{3}{3^{\gamma_1}})   \rfloor +1 \geq \lfloor 2 (1 - \frac{1}{3})   \rfloor +1  = 2.
$$
Similarly, if $p = 2$, then $a_1 > 0$ hence $\gamma_1 \geq 2$ and so
$$
p_4(X) \geq \lfloor 4 (1 - \frac{3}{2^{\gamma_1}})   \rfloor +1 \geq \lfloor 4 (1 - \frac{3}{4})   \rfloor +1 = 2.
$$
\item If $p = 3$, we have $\gamma_1 \geq 2$ and so
$$
p_3(X) \geq \lfloor 3 (1 - \frac{5}{3^{\gamma_1}})   \rfloor +1 \geq \lfloor 3 (1 - \frac{5}{9}) \rfloor +1  = 2.
$$
If $p = 2$, then $a_1 > 0$ actually implies $\gamma_1 \geq 3$ and so
$$
p_2(X) \geq \lfloor 2 (1 - \frac{4}{2^{\gamma_1}})   \rfloor +1 \geq \lfloor 2 (1 - \frac{1}{2})   \rfloor +1 = 2.
$$
\item Here, \eqref{eq: ineq} becomes
$$
p_i(X) \geq \lfloor i (1 - \frac{1}{p^{\gamma_1}} - \frac{1}{m_1}) \rfloor + \lfloor i (1 - \frac{1}{p^{\gamma_2}} - \frac{1}{m_2}) \rfloor  + 1.
$$
If $p = 3$, the minimal positive value of $(1 - \frac{1}{p^{\gamma_1}} - \frac{1}{m_1})$ is attained for $\gamma_1 = 1$, $m_1 = 3$, in which case $p_3(X) \geq 2$.
If $p = 2$, the minimal positive value is attained for $\gamma_1 = 1$ and $m_1 = 4$, in which case $p_4(X) \geq 2$.

 \end{enumerate}

Next, assume that $g(B) = 1$. By \autoref{lem: chi2}, all fibers of $g$ are tame. Let $J(g) \colon J(X) \to B$ be the Jacobian fibration. 
Since $g$ has no wild fibers, $R^1g_* \mathcal{O}_X$ is torsion-free, so $R^1J(g)_* \mathcal{O}_{J(X)} \cong R^1g_* \mathcal{O}_X$ by \cite[Theorem 3.1]{LiuLorenziniRaynaud}. Thus, $p_1(J(X)) = p_1(X) = 1$ by Serre duality. On the other hand, by \eqref{eq: canonical}, the canonical sheaf of $J(X)$ is the pullback of a degree $0$ line bundle from $B$. Thus, $p_1(J(X)) = 1$ implies that it is trivial. By the classification of surfaces of Kodaira dimension $0$, this implies that $J(X)$ is a (quasi-)bielliptic surface with trivial canonical bundle. These surfaces only exist in characteristics $2$ and $3$, so it suffices to show that $p_4(X) \geq 2$.

By \eqref{eq: canonical} and since $g$ has only tame fibers, we have
$$
\omega_X \cong \mathcal{O}_X\left(\sum_{b \in B} (m_b -1)F_b\right),
$$
where $m_b$ is the multiplicity of $F_b$. If at least one $m_b \geq 2$, then $\omega_X^{\otimes 4}$ contains the pullback of a line bundle of degree $2$ from $B$, hence $p_4(X) \geq 2$, contradicting our assumptions. Therefore, $p_4(X) = 1$ is only possible if $\omega_X \cong \mathcal{O}_X$, in which case the classification of surfaces implies that $X$ is either Abelian or (quasi-)bielliptic. 
\end{proof}

\section{Examples}\label{sec: examples}
It is well-known that some assumption on the $p_i(X)$ with $i \geq 2$ is necessary in \autoref{thm: mainintro}, even in characteristic $0$. We thus focus on constructing examples that show the sharpness of the assumptions on the plurigenera in \autoref{thm: mainintro} when $p \in \{2,3,5\}$.

\begin{example} \label{ex: 5}
Here, we show that the bound of \autoref{thm: mainintro} is sharp if $p = 5$.
The proof of \cite[Proposition 3.6]{Ferrari} and \autoref{thm: mainintro} shows that we must look for an example of an elliptic surface $g \colon X \to \mathbb{P}^1$ whose Jacobian $J(g)$ is smooth and such that $g$ has exactly one wild multiple fiber $F$ of multiplicity $5$ such that the underlying reduced fiber has trivial normal bundle and such that the coefficient of $F$ in \eqref{eq: canonical} is $2$. It turns out that the relevant example already appears implicitly in the literature in a series of examples due to Katsura and Ueno \cite[Example 8.1]{KatsuraUeno}:

Assume $p \geq 3$ and consider the smooth curve
$$
C \coloneqq \{x^py - xy^p = z^2\} \subseteq \mathbb{P}(1,1,\frac{p+1}{2}).
$$
of genus $p_a(C) = \frac{1}{2}(p-1)$ and the automorphism $\sigma \colon[x:y:z] \mapsto [x+y:y:z]$ of $C$ of order $p$. Let $E$ be an ordinary elliptic curve and define
$$
X \coloneqq (C \times E)/(\mathbb{Z}/p\mathbb{Z}),
$$
where a generator of $\mathbb{Z}/p\mathbb{Z}$ acts on $C$ through $\sigma$ and on $E$ through translation by a non-trivial $p$-torsion point.
Consider the commutative diagram
$$
\xymatrix{
C \times E \ar^{\tilde{g}}[r] \ar^{\tilde{\pi}}[d] & C \ar^{\pi}[d]\\
X \ar^{g \quad}[r] & \mathbb{P}^1 \cong C/(\mathbb{Z}/p\mathbb{Z})
}
$$
Let $c \in C$ be the ramification point of $\pi$ and let $F$ and $\tilde{F}$ be the (reductions of the) fibers over $\pi(c)$ and $c$, respectively. 
Then, by Riemann--Hurwitz and ramification formulas, we have
\begin{eqnarray*}
K_C &\sim& (p-3)c, \\
K_{C\times E} & \sim & (p-3) \tilde{F}, \\
K_X & \sim & (pl + a -2p)F,
\end{eqnarray*}
where $l$ is the length of the torsion in $R^1g_* \mathcal{O}_X$ and $a$ is the contribution of the multiple fiber $pF$ in the canonical bundle formula. Since $\tilde{\pi}$ is \'etale, we have $\tilde{\pi}^*K_X \sim K_{C \times E}$ and $\tilde{\pi}^*F \sim \tilde{F}$, so
$$
(p-3) \tilde{F} \sim (pl + a - 2p) \tilde{F}.
$$
Since $a < p$, we conclude that $l = 2$ and $a = p-3$. In other words,
$$
K_X \sim (p-3) F.
$$
Now, since $pF$ is a fiber of $g$, we can compute
$$
p_1(X) = \lfloor \frac{p-3}{p} \rfloor +1 = 1, p_2(X) =  \lfloor \frac{2p-6}{p} \rfloor +1.
$$
If $p = 5$, then $2p-6 = 4$, hence $\lfloor \frac{2p-6}{p} \rfloor = 0$ and so $p_2(X) = 1$, as desired.
\end{example}

\begin{example} \label{ex: 3}
Here, we show that the bound of \autoref{thm: mainintro} is sharp if $p = 3$.
The proof of \autoref{thm: mainintro} shows that we must look for an example of an elliptic surface $g \colon X \to B$ with $g(B) = 1$ such that the Jacobian $J(g) \colon J(X) \to B$ is a (quasi-)bielliptic surface with trivial canonical bundle and such that $g$ has exactly one multiple fiber $F$. Moreover, $F$ must be tame and of multiplicity $2$. In the following, we will construct a $1$-dimensional family of elliptic surfaces which specialize to what we need if $p = 3$.

Assume $p \neq 2$. Consider the canonical curve $C$ of genus $4$ and degree $6$ in $\mathbb{P}^3_k$ given by
\begin{eqnarray*}
x_0^2 + x_1^2 + x_2^2 + x_3^2 &=& 0 \\
x_0(x_1^2 + x_2^2 + x_3^2) + \lambda x_1x_2x_3 &=& 0
\end{eqnarray*}
for some $\lambda \in k^{\times}$ with $\lambda^2 \neq -27$. Consider $G = (\mathbb{Z}/2\mathbb{Z})^3 \rtimes \mathbb{Z}/3\mathbb{Z}$ and the action of $G$ on $C$ given by sign changes and cyclic permutation of $x_1,x_2,x_3$. Let $H = (\mathbb{Z}/2\mathbb{Z})^2 \rtimes \mathbb{Z}/3\mathbb{Z} \cong A_4$ be the subgroup where the involutions swap an even number of signs.
The fixed points of the involutions in $V_2 \coloneqq (\mathbb{Z}/2\mathbb{Z})^2$ come in pairs $$p_1^{\pm} = [0:1:\pm \zeta_4: 0], \: \: p_2^{\pm} = [0:1:0:\pm \zeta_4], \: \: p_3^{\pm} = [0:0:1:\pm \zeta_4],$$ so by the Hurwitz formula $B' \coloneqq C/V_2$ is an elliptic curve. The group $\mathbb{Z}/3\mathbb{Z}$ acts on $B'$ without fixed points, so $B \coloneqq C/A_4$ is an elliptic curve as well and the induced morphism $B' \to B$ is \'etale of degree $3$.

% Let $B$ be an elliptic curve over $k$ with $j(B) \neq 0$. Pick a point $O \in B$, a general point $b \in B$, and a non-trivial $3$-torsion point $t \in B$. Consider the three divisors $D_1,D_2,D_3$ determined by $D_i \in |b + (i-1)t|$. Then,
% \begin{eqnarray*}
% D_1 + D_2 &\in & |2b + t| \\
% D_1 + D_3 &\in & |2b + 2t| \\
% D_2 + D_3 &\in & |2b|
% \end{eqnarray*}
% Pick a line bundle $\mathcal{L}$ with $\mathcal{L}^{\otimes 2} \cong \mathcal{O}_B(D_1 + D_2)$ and let $\mathcal{L}'$ be the pullback of $\mathcal{L}$ along translation by $t$, so that $\mathcal{L}'^{\otimes 2} \cong \mathcal{O}_B(D_1 + D_2 +t) \cong \mathcal{O}_B(D_1 + D_3)$. \textcolor{blue}{todo: Instead of this abstract thing, find an explicit equation of a genus $4$ curve with $A_4$-action such that the quotient is an elliptic curve...}
% This yields a Galois cover $\pi \colon C \to B$ with group $V_2 = (\mathbb{Z}/2\mathbb{Z})^2$ with $3$ branch points $D_1,D_2,D_3$ and with $2$ ramification points $D_i^{\pm}$ of index $2$ over each $D_i$. Translation by $t$ lifts to an automorphism of $C$, resulting in an $A_4$-action on $C$ with quotient the elliptic curve $B' \cong B/(\mathbb{Z}/3\mathbb{Z})$.

Now, take an elliptic curve $E$ with $j(E) = 0$. Let $A_4$ act on $C$ via the above action and on $E$ so that $V_2$ acts as translation by the full $2$-torsion of $E$ and the $\mathbb{Z}/3\mathbb{Z}$-part is an automorphism of $E$ fixing the origin. Note that the resulting $A_4$-action on $C \times E$ is free. Consider the quotients $X \coloneq (C \times E)/A_4$ and $X' \coloneqq (C \times E)/V_2$ and the genus $1$ fibrations $g \colon X \to B$ and $g' \colon X' \to B'$.

A general fiber of $g$ splits into twelve fibers of the projection under the cover $C \times E \to X$. The six fibers $F_1^{\pm},F_2^{\pm},F_3^{\pm}$ over the points $p_i^{\pm} \in C$ get identified pairwise to three fibers $F'_1,F'_2,F'_3$ over points $q'_1,q'_2,q'_3 \in B'$ and these three points map to a single point $q \in B$. Thus, the $F'_i$ are double fibers of $g'$ and the fiber $F$ of $g$ over $q$ is a double fiber as well. In particular, $F$ is tame since $p \neq 2$. Note that $\chi(\mathcal{O}_X) = 0$, since $X$ is an \'etale quotient of $C \times E$. By the canonical bundle formula, we thus have
$$
\omega_X \cong g^* \mathcal{L}^{-1} \otimes  \mathcal{O}_X(F),
$$
where $\mathcal{L}^{-1}$ is a line bundle of degree $0$ on $B$.

It remains to compute $\mathcal{L}^{-1}$. The pullback of $\mathcal{L}^{-1}$ to $C \times E$ can be computed through the canonical bundle formula and the composition $C \times E \to X \to B$. It is $(\omega_C \boxtimes \omega_E)(-\sum_{i=1}^3 F_i^{\pm})$, which is the same as the pullback of $\omega_C(-\sum_{i=1}^3 p_i^{\pm})$ along the projection $C \times E \to C$. But now note that the divisor $\sum_{i=1}^3 p_i^{\pm}$ is the intersection of $C$ with the hyperplane $V_+(x_0)$, hence it is a canonical divisor. The pullback of $\mathcal{L}^{-1}$ to $C \times E$ is therefore trivial. Since $\Pic(C) \to \Pic(C \times E)$ is injective, we deduce that the pullback of $\mathcal{L}$ to $C$ is trivial. 

Note that the pullback map $\Pic(B') \to \Pic(C)$ is injective: if not, then by \cite[Proposition 0.2.14]{EnriquesI} we know that $C \to B'$ factors through an étale cover of $B'$, which is impossible since every element of $V_2$ has fixed points in $C$. Thus, $\mathcal{L}$ actually lies in $\ker(\Pic^0(B) \to \Pic^0(B')) \cong \mu_3$. In particular, we deduce that $\mathcal{L}$ is trivial if $p = 3$. In this case, then $\omega_X \cong \mathcal{O}_X(F)$, and hence $p_1(X) = p_2(X) = p_3(X) = 1$ while $p_4(X) \geq 2$. As an aside, note that \autoref{cor: mainintro} implies that $\mathcal{L}$ is not trivial if $p \neq 3$.

%We then obtain that the map $C \to B$ factors through the cyclic cover $B'' \to B$ associated to $\mathcal{L}$ (see e.g. \cite[Proposition 0.2.14]{EnriquesI}). Since $C \to B$ is separable, the order of $\mathcal{L}$ must be prime to $p$.

%Assume that $\mathcal{L}$ is non-trivial. Then, $B'' \to B$ is \'etale. Since every involution in $A_4$ has fixed points on $C$, the only way to factor $C \to B'$ through an \'etale morphism is via $B'' = B'$. Then, $B'' \to B$ has degree $3$, hence $p \neq 3$.

%In particular, if $p = 3$, then $\mathcal{L}$ must be trivial. Then, $\omega_X \cong \mathcal{O}_X(F)$, hence $p_1(X) = p_2(X) = p_3(X) = 1$ while $p_4(X) \geq 2$. As an aside, note that \autoref{cor: mainintro} implies that $\mathcal{L}$ is not trivial if $p \neq 3$.
\end{example}

\begin{example} \label{ex: 2}
In characteristic $2$, there are several ways in which a minimal surface which is not Abelian or (quasi-)bielliptic but satisfies $h^1(X,\mathcal{O}_X) = 2$ and $p_1(X) = p_2(X) = p_3(X) = 1$ can arise. Among these, the simplest one seems to be the following characteristic $2$ analog of \autoref{ex: 3}.

Assume that $p = 2$, and consider the curve $C$ of arithmetic genus $3$ in $\mathbb{P}(1,1,1,2)$ given by
\begin{eqnarray*}
x_3^2 + x_1x_2 x_3 + x_1^4  + x_2^4 &= & 0\\
x_0^2 + x_1x_2 &=& 0.
\end{eqnarray*}
Then $C$ has two cusps, at $[0:0:1:1]$ and $[0:1:0:1]$. Consider the length $8$ dihedral group scheme $G = \mu_4 \rtimes \mathbb{Z}/2\mathbb{Z}$, where $\mathbb{Z}/2\mathbb{Z}$ acts on $\mu_4$ as inversion. There is a $\mu_4$-action on $C$ given by
$$
\tau_a \colon [x_0:x_1:x_2:x_3] \mapsto [x_0 :ax_1:a^{-1}x_2:x_3]
$$
with $a^4 = 1$ and a $\mathbb{Z}/2\mathbb{Z}$-action given by
$$
\sigma \colon [x_0:x_1:x_2:x_3] \mapsto [x_0:x_2:x_1:x_3+x_1x_2]
$$
which together define a $G$-action since $\sigma \circ \tau_a \circ \sigma = \tau_{a^{-1}}$. 

Note that $\tau_a^2 \colon [x_0:x_1:x_2:x_3] \mapsto [x_0:a^2x_1:a^2x_2:x_3] = [a^2x_0:x_1:x_2:x_3]$, so $C/\mu_2$ is the smooth genus $1$ curve $B \subseteq \mathbb{P}(1,1,2)$ given by
$$
x_3^2 + x_1x_2x_3 + x_1^4 + x_2^4.
$$
Neither $\mu_4$ nor $\mathbb{Z}/2\mathbb{Z}$ have fixed points on $C$, so the locus of points on $C$ with non-trivial stabilizer is the fixed locus of $\mu_2$, which is $C^{\mu_2} = V(x_0,x_1x_2,x_3^2 +x_1^4 +x_2^4)$. Note that $(C^{\mu_2})_{\red}$ is the union of the two cusps of $C$. The action of $G/\mu_2 = \mu_2 \times \mathbb{Z}/2\mathbb{Z}$ on $B$ can be checked to be free. It is the translation action of the $2$-torsion subscheme of ${\rm Aut}^0_B$.

Now, even though $C$ is singular, the quotient stack $[C/\mu_4]$ is smooth. This is clear away from the singular locus of $C$ because the chart $C \to [C/G]$ is smooth there. For the rest, by symmetry, it suffices to show that on the affine chart $x_1 = 1$, where $C$ is given by $C_2 = \Spec k[x_0,x_3]/(x_3^2 + x_0^2x_3 + 1 + x_0^8)$ with the $\mu_4$-action $(x_0,x_3) \mapsto (ax_0,a^2x_3)$ the quotient stack $[C_2/\mu_4]$ is smooth. By \cite[Proposition 4.17]{Brion}, this is equivalent to the smoothness of the contracted product 
$$
C_2 \times^{\mu_4} \mathbb{G}_m = \Spec (k[x_0,x_3,b,b^{-1}]/(x_3^2 + x_0^2x_3 + 1 + x_0^8))^{\mu_4},
$$
where $\mu_4$ acts as $(x_0,x_3,b) \mapsto (ax_0,a^2x_3,ab)$. Now, this invariant ring contains the smooth subalgebra
\begin{eqnarray*}
R &=& k[x_0b^{-1},x_3b^{-2},b^4,b^{-4}]/(x_3^2b^{-8} + x_0^2x_3b^{-8} + b^{-8} + x_0^8b^{-8})  \\ &\cong & k[x,y,t,t^{-1}]/(y^2t + x^2yt + t^2 + x^8),
\end{eqnarray*}
and $R[t^{\frac{1}{4}}] \cong k[x_0,x_3,b,b^{-1}]/(x_3^2 + x_0^2x_3 + 1 + x_0^8)$. Comparing degrees, we see that $C_2 \times^{\mu_4} \mathbb{G}_m$ is finite and birational over ${\rm Spec}(R)$, hence the two are isomorphic. Thus, $[C/\mu_4$] is smooth.

Let $E$ be another ordinary elliptic curve and consider the diagonal quotient $X \coloneqq (C \times E)/G$, where $G^0 = \mu_4$ acts on $E$ as translations and $\mathbb{Z}/2\mathbb{Z}$ acts as inversion.
By \cite[Proposition 4.17 and Remark 4.19]{Brion}, the smoothness of $[C/\mu_4]$ implies that $X' \coloneqq (C \times E)/\mu_4$ is smooth, and since $G$ acts freely on $C \times E$ this implies the smoothness of $X$.

We thus have the following commutative diagram
$$
\xymatrix{
C \times E \ar^{\pi''}[r] \ar^g[d] & X'' \ar[r]  \ar^{f''}[d] & X' \ar[r]  \ar^{f'}[d]& X  \ar^f[d] \\ 
C \ar[r] & B'' = C/\mu_2 \ar[r] & B' = C/\mu_4 \ar[r] & B = C/G
}
$$
where the top horizontal arrows are torsors under length $2$ group schemes and the vertical arrows are genus $1$ fibrations with general fiber $E$. The surfaces $C \times E$ and $X''$ are non-normal, but $X'$ and $X$ are smooth. We compute the invariants of the elliptic fibration $f'$ using \eqref{eq: canonical} and the $\mu_4$-torsor $\pi \colon C \times E \to X'$. Away from the two points $c_1,c_2 \in C^{\mu_2}$, $C \times E \to C$ is the pullback of $f'$ along $h \colon C \to B'$, so $f'$ has no multiple fibers there. Comparing pullbacks for the other two fibers, we see that $f'$ has two double fibers $2F_1'$ and $2F_2'$. We have
\begin{eqnarray*}
\pi^* \omega_{X'} &\cong& \omega_{C \times E} \cong g^* \omega_C \\
\pi^*\omega_{X'} & \cong & g^*(h^* \mathcal{L}'^{-1}) \otimes_{\mathcal{O}_{C \times E}} \mathcal{O}_{C \times E}(2a_1'g^{-1}(c_1) + 2a_2'g^{-1}(c_2)),
\end{eqnarray*}
where $\mathcal{L'}^{-1}$ is an invertible sheaf of degree equal to the length of the torsion in $R^1f'_* \mathcal{O}_{X'}$ and $a_i'$ is the coefficient of $F_i'$ in \eqref{eq: canonical}. Note that $\omega_{C}$ has degree $4$ and ${\rm deg}(h^*\mathcal{L}'^{-1}) = 4 \deg(\mathcal{L}'^{-1})$. Since the two multiple fibers of $f'$ are permuted by the $\bZ/2\bZ$--action, it follows that they are either both tame or wild. In the latter case, then it would follow that $\deg(\mathcal{L}'^{-1}) \geq 2$, and therefore give that $4 = \deg(\omega_C) \geq 4\deg(\mathcal{L}'^{-1}) \geq 8$, a contradiction. Hence, both fibers are tame, and $a_1' = a_2' = 1$.
% From our explicit description of $C$, we also know that $2c_1 + 2c_2$ is the zero locus of a section of $\omega_C \cong \mathcal{O}_{\mathbb{P}(1,1,1,2)}(1)|_C$, so applying the projection formula to the above descriptions of $\pi^* \omega_{X'}$ we deduce that $h^* \mathcal{L}'^{-1} \cong \mathcal{O}_C$. 
Thus,
$$
\omega_{X'} \cong f'^* \mathcal{L}'^{-1} \otimes_{\mathcal{O}_{X'}} \mathcal{O}_{X'}(F_1' + F_2')
$$
with $\deg(\mathcal{L}') = 0$.

Next, observe that, by adjunction, the embedding $C \to \mathbb{P}(1,1,1,2)$ is induced by the dualizing sheaf $\omega_C$. As $\pi^* \omega_{X'} \cong g^* \omega_C$, the $\mu_4$-invariant section $x_0 |_C \in H^0(C,\omega_C)$ yields a non-zero global section of $\omega_{X'}$ and hence a non-zero global section of
$$
f'_* \omega_{X'} \cong \mathcal{L'}^{-1}.
$$
Thus, $\mathcal{L}' \cong \mathcal{O}_{B'}$.

Now, using that the covering involution of $X' \to X$ swaps $F_1'$ and $F_2'$ and letting $F$ be their image, we get
$$
\omega_X \cong f^* \mathcal{L}^{-1} \otimes_{\mathcal{O}_X} \mathcal{O}_X(F),
$$
where $\mathcal{L}$ pulls back to $\mathcal{O}_{B'}$ under $B' \to B$. Since $B' \to B$ is \'etale of degree $2$ and $p = 2$, the pullback map $\Pic(B) \to \Pic(B')$ is injective, hence $\mathcal{L} \cong \mathcal{O}_B$. Thus, $\omega_X$ is not trivial while $p_1(X) = p_2(X) = p_3(X) = 1$ and $p_4(X) \geq 2$, as desired.
\end{example}

 \begin{remark}
 If $p \neq 2$, the curve $C$ in \autoref{ex: 2} is smooth with a $D_8$-action and the analogous construction yields an elliptic surface $f \colon X \to C/D_8$ with a unique tame double fiber and $R^1f_*\mathcal{O}_X$ is a non-trivial $2$-torsion line bundle, hence $p_1(X) = 0$.
 \end{remark}

\iffalse
\section{The case $h^1(\cO_S) = 2$ and $b_1(S) = 2$}

I'm just recalling some arguments towards where I remember we were. Throughout, $S$ denotes a minimal surface of Kodaira dimension $1$ such that $P_1(S) = P_2(S) = 1$, $h^1(\cO_S) = 2$ and $b_1(S) = 2$. We denote by $f \colon S \to C$ its Iitaka fibration, and by $a \colon S \to E$ its Albanese morphism (so $E$ is an elliptic curve).

Here's a thought I had: I believe that automatically, 

\begin{lemma}
    There are two possibilities:
    \begin{enumerate}
        \item $C = \bP^1$, and $f$ is not quasi-elliptic, but wild with $\length(\cT) = 2$. In this case, all fibers of $f$ must dominate $E$ (which I think means that all their reductions are smooth elliptic curves, right? I really feel like we can push further in this case).
        \item $f = a$ and the fibration admits no wild fiber.
    \end{enumerate}
\end{lemma}

\begin{proof}
    If $C = \bP^1$ and the fibration was quasi-elliptic, then this would imply that the image of $S \to E$ is trivial.
\end{proof}
\fi

    \bibliographystyle{amsalpha}
	\bibliography{refs}
	
\end{document}